\theoremstyle{plain}
\newtheorem{thm}{Theorem}[section]
\newtheorem{lemma}[thm]{Lemma}
\newtheorem{prop}[thm]{Proposition}
\theoremstyle{definition}
\newtheorem{rmk}[thm]{Remark}
\newtheorem{example}[thm]{Example}
\def\dim{\mathop{\hbox {dim}}\nolimits}
\newcommand{\fra}{\mathfrak{a}}
\newcommand{\frb}{\mathfrak{b}}
\newcommand{\frg}{\mathfrak{g}}
\newcommand{\frh}{\mathfrak{h}}
\newcommand{\frk}{\mathfrak{k}}
\newcommand{\frl}{\mathfrak{l}}
\newcommand{\frp}{\mathfrak{p}}
\newcommand{\frq}{\mathfrak{q}}
\newcommand{\frt}{\mathfrak{t}}
\newcommand{\fru}{\mathfrak{u}}
\newcommand{\bbC}{\mathbb{C}}
\newcommand{\bbR}{\mathbb{R}}
\newcommand{\bbZ}{\mathbb{Z}}
\newcommand{\caC}{\mathcal{C}}
\newcommand{\caL}{\mathcal{L}}
\newcommand{\caR}{\mathcal{R}}
\begin{document}

\title{Dirac series of $E_{7(-25)}$}

\author{Yi-Hao Ding}
\address[Ding]{School of Mathematical Sciences, Soochow University, Suzhou 215006,
P.~R.~China}
\email{435025738@qq.com}

\author{Chao-Ping Dong}
\address[Dong]{School of Mathematical Sciences, Soochow University, Suzhou 215006,
P.~R.~China}
\email{chaopindong@163.com}


\abstract{By further sharpening the Helgason-Johnson bound in 1969, this paper classifies all the irreducible unitary representations with non-zero Dirac cohomology of the Hermitian symmetric real form $E_{7(-25)}$. }

\endabstract

\subjclass[2010]{Primary 22E46}

\keywords{Dirac cohomology, non-vanishing, number of strings}

\maketitle


\section{Introduction}

As a sequel to \cite{DDH} and \cite{DDY}, this article aims to classify the irreducible unitary representations with non-zero Dirac cohomology for the simple linear Lie group $E_{7(-25)}$, which is Hermitian symmetric.

We embark with a complex connected  simple algebraic group $G_{\bbC}$ which has finite center.
Let $\sigma: G_{\bbC} \to G_{\bbC}$ be a \emph{real form} of $G_{\bbC}$. That is, $\sigma$ is an antiholomorphic Lie group automorphism and $\sigma^2={\rm Id}$. Let $\theta: G_{\bbC}\to G_{\bbC}$ be the involutive algebraic automorphism of $G_{\bbC}$ corresponding to $\sigma$ via Cartan theorem (see Theorem 3.2 of the paper \cite{ALTV} by Adams, van Leeuwen, Trapa and Vogan). Put $G=G_{\bbC}^{\sigma}$ as the group of real points. Note that $G$ must be in the Harish-Chandra class \cite{HC}.  Denote by $K_{\bbC}:=G_{\bbC}^{\theta}$, and put $K:=K_{\bbC}^{\sigma}$. Denote by $\frg_0$ the Lie algebra of $G$, and let
$$
\frg_0=\frk_0\oplus \frp_0
$$
be the  Cartan decomposition corresponding to $\theta$ on the Lie algebra level.

Let $H_f=T_f A_f$ be the $\theta$-stable fundamental Cartan subgroup for $G$. Then  $T_f$ is a maximal torus of $K$, and on the Lie algebra level,
$$
\frh_{f, 0}=\frt_{f, 0}\oplus \fra_{f, 0}
$$
is the unique $\theta$-stable fundamental Cartan subalgebra of $\frg_0$.  As usual, the subscripts are dropped to stand for the complexified Lie algebras. For example, $\frg=\frg_0\otimes_{\bbR}\bbC$, $\frh_f=\frh_{f, 0}\otimes_{\bbR}\bbC$ and so on. We fix a non-degenerate invariant symmetric bilinear form $B(\cdot, \cdot)$ on $\frg$. Its restrictions to $\frk$, $\frp$, etc., will also be denoted by the same symbol.

Let $l$ be the rank of $\frg$. That is, $l=\dim_{\bbC} \frh$. Fix a positive root system $\Delta^+(\frg, \frh_f)$, and let $\{\zeta_1, \dots, \zeta_l\}$ be the corresponding fundamental weights. Restricting the roots in $\Delta^+(\frg, \frh_f)$ to $\frt_f$, we have that
$$
\Delta^+(\frg, \frt_f)=\Delta^+(\frk, \frt_f) \cup \Delta^+(\frp, \frt_f).
$$
Let $\rho$ (resp., $\rho_c$) be the half sum of roots in $\Delta^+(\frg, \frt_f)$ (resp., $\Delta^+(\frk, \frt_f)$). Then $\rho_n:=\rho-\rho_c$
is the half sum of roots in $\Delta^+(\frp, \frt_f)$.
We will denote the Weyl groups corresponding to these root systems by $W(\frg, \frh_f)$, $W(\frg, \frt_f)$, $W(\frk, \frt_f)$.

Fix an orthonormal basis $Z_1, \dots, Z_n$ of $\frp_0$ with respect to
the inner product induced by the form $B(\cdot, \cdot)$. Let $U(\frg)$ be the
universal enveloping algebra of $\frg$ and let $C(\frp)$ be the
Clifford algebra of $\frp$ with respect to $B(\cdot, \cdot)$. In 1972,
Parthasarathy introduced the \emph{Dirac operator} as
\begin{equation}
D:=\sum_{i=1}^{n}\, Z_i \otimes Z_i\in U(\frg)\otimes C(\frp).
\end{equation}
It is easy to check that $D$ is independent of the choice of the
orthonormal basis $Z_i$ and it is $K$-invariant under the diagonal
action of $K$ given by adjoint actions on both factors. Moreover, we embed the Lie algebra $\frk$ diagonally into $U(\frg)\otimes C(\frp)$ in the following way:
$$
X\mapsto X_{\Delta}:=X\otimes 1 + 1 \otimes \sum_{i<j}\frac{1}{2}B(X, [Z_i, Z_j])Z_i Z_j, \quad X\in\frk.
$$
Denote the image of $\frk$ by $\frk_{\Delta}$. Let $\Omega_{\frg}$ (resp., $\Omega_{\frk_{\Delta}}$) be the  Casimir element of $\frg$ (resp., $\frk_{\Delta}$). Then
\begin{equation}\label{D-square}
D^2=-\Omega_{\frg}\otimes 1 +\Omega_{\frk_{\Delta}} +(\|\rho_c\|^2-\|\rho\|^2)1\otimes 1.
\end{equation}

Let $\pi$ be an irreducible $(\frg, K)$ module. Let $S_G$ be a spin module of $C(\frp)$. The Dirac operator acts on $\pi\otimes S_G$. When $\pi$ is unitary, the operator $D$ is self-adjoint with respect to a natural Hermitian inner product on $\pi\otimes S_G$, and $D^2$ has non-negative eigenvalue on any $\widetilde{K}$-type of $\pi\otimes S_G$.
Here $\widetilde{K}$ is the subgroup of $K\times \text{Pin}\,\frp_0$ consisting of all pairs $(k, s)$ such that $\text{Ad}(k)=p(s)$, where $\text{Ad}: K\rightarrow \text{O}(\frp_0)$ is the adjoint action, and $p: \text{Pin}\,\frp_0\rightarrow \text{O}(\frp_0)$ is the pin double covering map. Namely, $\widetilde{K}$ is constructed from the following diagram:
\[
\begin{CD}
\widetilde{K} @>  >  > {\rm Pin}\, \frp_0 \\
@VVV  @VVpV \\
K @>{\rm Ad}>> {\rm O}(\frp_0)
\end{CD}
\]
Using \eqref{D-square}, one can deduce that
\begin{equation}\label{Dirac-inequality}
\|\gamma+\rho_c\|\geq \|\Lambda\|,
\end{equation}
where $\gamma$ is a highest weight of any $\widetilde{K}$-type occurring in $\pi\otimes S_G$.
This is Parthasarathy's \emph{Dirac operator inequality} \cite{Pa2}, which effectively detects non-unitarity.
For instance, it is an important tool in the classification of irreducible unitary highest weight modules for Hermitian symmetric real forms by Enright, Howe and Wallach \cite{EHW}. However, it is \emph{not} sufficient for unitarity.

To sharpen the Dirac operator inequality, and to understand the unitary dual $\widehat{G}$ better, Vogan formulated \emph{Dirac cohomology} in 1997 \cite{Vog97} as the following $\widetilde{K}$-module:
\begin{equation}\label{def-Dirac-cohomology}
H_D(\pi)=\text{Ker}\, D/ (\text{Im} \, D \cap \text{Ker} D).
\end{equation}
Here we note that $\widetilde{K}$ acts  on $\pi$
through $K$ and on $S_G$ through the pin group
$\text{Pin}\,{\frp_0}$. Moreover, since ${\rm Ad}(k) (Z_1), \dots, {\rm Ad}(k) (Z_n)$ is still an orthonormal basis of $\frp_0$, it follows that $D$ is $\widetilde{K}$ invariant. Therefore, ${\rm  Ker} D$, ${\rm Im} D$, and $H_D(X)$ are all $\widetilde{K}$ modules.

A fundamental result pertaining to Dirac cohomology is the  Vogan conjecture proven by Huang and Pand\v zi\'c in 2002. See Theorem 2.3 of \cite{HP}. Let us state its tiny extension to possibly disconnected groups.
By setting the linear functionals on $\frt_f$ to be zero on $\fra_f$, we regard $\frt_f^{*}$ as a subspace of $\frh_f^{*}$.

\begin{thm}{\rm (Theorem A of \cite{DH})}\label{thm-HP}
Let $\pi$ be an irreducible ($\frg$, $K$)-module with infinitesimal character $\Lambda$.
Assume that the Dirac
cohomology of $\pi$ is nonzero, and let $\gamma\in\frt_f^{*}\subset\frh_f^{*}$ be any highest weight of a $\widetilde{K}$-type  in $H_D(\pi)$. Then $\Lambda$ is conjugate to
$\gamma+\rho_{c}$ under the action of the Weyl group $W(\frg,\frh_f)$.
\end{thm}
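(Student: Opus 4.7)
The plan is to reduce the disconnected statement to the original Huang--Pand\v{z}i\'{c} theorem \cite{HP} by restriction to the identity component. Since $G$ is in the Harish-Chandra class, the component group $G/G_0$ is finite, and $K/K_0$ is also finite, where $G_0$ is the identity component and $K_0 := K\cap G_0$. Consequently $\widetilde{K_0}$ is the identity component of $\widetilde{K}$. The irreducible $(\frg, K)$-module $\pi$ restricts to a finite direct sum
\[
\pi|_{(\frg, K_0)} = \pi_1 \oplus \cdots \oplus \pi_r
\]
of irreducible $(\frg, K_0)$-modules, all of which are conjugate under the action of $K/K_0$. In particular each $\pi_i$ shares the same infinitesimal character $\Lambda \in \frh_f^{*}$, since $Z(U(\frg))$ acts by the same character on all of $\pi$.

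Next I would transfer Dirac cohomology to the components. The Dirac operator $D \in U(\frg)\otimes C(\frp)$ and the diagonal embedding $\frk\hookrightarrow U(\frg)\otimes C(\frp)$ are purely Lie-algebraic, and the spin module $S_G$ is acted on via $\mathrm{Pin}\,\frp_0$. Therefore $D$ preserves each summand $\pi_i\otimes S_G$, and
\[
H_D(\pi) = \bigoplus_{i=1}^r H_D(\pi_i)
\]
as $\widetilde{K_0}$-modules. Now suppose $V$ is a $\widetilde{K}$-type in $H_D(\pi)$ with highest weight $\gamma\in\frt_f^{*}$ for the fixed positive system $\Delta^+(\frk, \frt_f)$. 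Restricting $V$ to $\widetilde{K_0}$, one of its irreducible constituents $V_0$ contains the highest weight vector of $V$; since $\Delta^+(\frk, \frt_f)$ is determined at the Lie algebra level, $\gamma$ is also the $\widetilde{K_0}$-highest weight of $V_0$. Moreover, $V_0$ sits inside $H_D(\pi_i)$ for some $i$.

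At this point the classical Huang--Pand\v{z}i\'{c} theorem applies to the irreducible $(\frg, K_0)$-module $\pi_i$: its infinitesimal character, which is $\Lambda$, is $W(\frg, \frh_f)$-conjugate to $\gamma+\rho_c$. Here the Weyl group $W(\frg, \frh_f)$ depends only on the root system $\Delta(\frg, \frh_f)$ and is thus identical for $G$ and $G_0$, which is exactly the conclusion sought.

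The only mildly delicate step is the highest-weight matching between $\widetilde{K}$-types and their $\widetilde{K_0}$-restrictions; the potential worry is that $\widetilde{K}/\widetilde{K_0}$ could permute the constituents of $V|_{\widetilde{K_0}}$, affecting which weight deserves to be called ``highest''. However, because we have fixed a single positive system $\Delta^+(\frk, \frt_f)$ at the Lie algebra level, the character $\gamma$ occurs as the $\Delta^+(\frk, \frt_f)$-highest weight of at least one irreducible $\widetilde{K_0}$-constituent of $V$, which is all that is needed to invoke \cite{HP} on the corresponding $\pi_i$. Beyond this bookkeeping, no new analytic input is required.
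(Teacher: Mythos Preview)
The paper does not supply its own proof of this statement; it is quoted verbatim as Theorem~A of \cite{DH} and described as a ``tiny extension to possibly disconnected groups'' of the Huang--Pand\v{z}i\'{c} theorem \cite[Theorem 2.3]{HP}. There is therefore nothing in the present paper to compare your argument against.

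That said, your reduction to the connected case is the natural route and is correct. The key points --- that $\pi|_{(\frg,K_0)}$ decomposes into finitely many irreducibles sharing the infinitesimal character $\Lambda$, that $D$ is purely Lie-algebraic so $H_D(\pi)=\bigoplus_i H_D(\pi_i)$ as $\widetilde{K_0}$-modules, and that the $\Delta^+(\frk,\frt_f)$-highest weight vector of a $\widetilde{K}$-type $V$ generates an irreducible $\widetilde{K_0}$-summand with the same highest weight $\gamma$ --- are all sound, and the invocation of \cite{HP} on the resulting $(\frg,K_0)$-module $\pi_i$ is legitimate since $G_0$ is connected reductive. Given that the paper calls the extension ``tiny,'' this restriction-to-identity-component argument is almost certainly what \cite{DH} does as well.
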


We care the most about the case that $\pi$ is unitary. Then  $\text{Ker} D \cap \text{Im} D=0$, and
\begin{equation}\label{Dirac-unitary}
H_D(\pi)=\text{Ker}\, D=\text{Ker}\, D^2.
\end{equation}
Moreover, by \cite[Theorem 3.5.2]{HP2}, the inequality \eqref{Dirac-inequality} becomes equality for certain $\widetilde{K}$-types of $\pi\otimes S_G$ if and only if $H_D(\pi)$ is non-vanishing.

Since Dirac cohomology is an invariant for Lie group representations, it is natural to ask: could we classify $\widehat{G}^d$---the set consisting of all the members of $\widehat{G}$ having non-vanishing Dirac cohomology? For convenience, we call members of $\widehat{G}^d$ the \emph{Dirac series} of $G$ (terminology suggested by J.-S. Huang). In view of the discussion after \eqref{Dirac-unitary},  the Dirac series of $G$  are exactly the members of $\widehat{G}$ on which Dirac inequality becomes equality.

The current paper aims to classify the Dirac series for $E_{7(-25)}$, by which we actually mean the connected simple real exceptional Lie  group $\texttt{E7\_h}$ in \texttt{atlas}, whose Lie algebra is denoted by ${\rm EVII}$ in Knapp \cite{Kn}, or by $E_{7(-25)}$ in other literature. Here \texttt{atlas} \cite{At} is a software which computes various types of questions relevant to the representations of $G$. For instance, it detects whether $\pi$ is unitary or not based on the algorithm in \cite{ALTV}. See Section \ref{sec-atlas} for a very brief account of \texttt{atlas}.

Let $\pi$ be any Dirac series representation of $G$ which has final \texttt{atlas} parameter $p=(x, \lambda, \nu)$. Then $\pi$ is a \textbf{FS-scattered} representation if the \textbf{KGB element} $x$ (see Section \ref{sec-atlas}) is fully supported, i.e., if $\texttt{support} (x)$ equals $[0, 1, \dots, l-1]$. Otherwise, $\pi$ will be merged into a string of representations. Theorem A of \cite{D17}  says that $\widehat{G}^d$ consists of finitely many FS-scattered representations and finitely many strings of representations. One can pin down them explicitly without reference to the entire unitary dual $\widehat{G}$. Indeed, it suffices to study the irreducible representations whose infinitesimal characters are relatively small. By further sharpening the Helason-Johnson bound \cite{HJ} on the norm of $\nu$ for $E_{7(-25)}$ (see Proposition \ref{prop-HJ}(c)), we are able to reduce the classification workload considerably. Besides the Dirac inequality \eqref{Dirac-inequality}, the \emph{unitarily small} (\emph{u-small} for short henceforth) \emph{convex hull} introduced by Salamanca-Riba and Vogan \cite{SV} is also vital to Proposition \ref{prop-HJ}(c). This concept and spin-lowest $K$-type will be recalled in Section \ref{sec-HJ}.

Now we are able to state the main result.

\begin{thm}\label{thm-EVII}
The set $\widehat{E_{7(-25)}}^d$ consists of $74$ FS-scattered representations whose spin-lowest $K$-types are all u-small, and $878$ strings of representations. Moreover, each spin lowest $K$-type of any Dirac series representation of  $E_{7(-25)}$ occurs exactly once.
\end{thm}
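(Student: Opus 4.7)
The plan is to follow the general strategy established in \cite{DDH} and \cite{DDY}, now tuned to $E_{7(-25)}$ by the sharpened Helgason-Johnson bound of Proposition \ref{prop-HJ}(c). First, by Theorem \ref{thm-HP}, every Dirac series representation $\pi$ has infinitesimal character $\Lambda$ which is $W(\frg,\frh_f)$-conjugate to $\gamma+\rho_c$ for some highest weight $\gamma\in\frt_f^{*}$ of a $\widetilde{K}$-type in $H_D(\pi)$. Since $\gamma$ is $K$-integral, this confines $\Lambda$ to a discrete affine translate of the weight lattice. Combined with Proposition \ref{prop-HJ}(c), which bounds $\|\nu\|$ once one restricts attention to representations whose spin-lowest $K$-type lies inside the u-small convex hull, the infinitesimal character candidates $\Lambda$ narrow down to a finite explicit list.

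For each such $\Lambda$, the next step is to enumerate via \texttt{atlas} the irreducible $(\frg,K)$-modules with infinitesimal character $\Lambda$, discard the non-unitary ones via the \cite{ALTV} algorithm, and test the Dirac inequality \eqref{Dirac-inequality} against the lowest $K$-types. By \eqref{Dirac-unitary} and \cite[Theorem 3.5.2]{HP2}, equality in \eqref{Dirac-inequality} for some $\widetilde{K}$-type of $\pi\otimes S_G$ is equivalent to $H_D(\pi)\neq 0$; this lets one decide membership in $\widehat{E_{7(-25)}}^d$ without a full $H_D$ computation in most cases. Having isolated the Dirac series at each $\Lambda$, one partitions the resulting final \texttt{atlas} parameters $(x,\lambda,\nu)$ according to whether $\texttt{support}(x)=[0,1,\dots,6]$: the fully supported ones furnish the FS-scattered list, while the rest are organized into strings as guaranteed by \cite[Theorem A]{D17}, with string representatives identified by letting the continuous parameter $\nu$ vary along the appropriate ray and checking persistence of the Dirac inequality equality.

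Once the raw list is in hand, one counts the FS-scattered parameters, verifying the number $74$, and confirms that each of their spin-lowest $K$-types lies in the u-small convex hull (which is forced by Proposition \ref{prop-HJ}(c) since such $x$ exhaust the candidates that saturate the sharpened bound). Tabulating the string initial data gives the count $878$. The multiplicity-one assertion is then checked by listing, for every FS-scattered $\pi$ and every string head, the highest weight of the spin-lowest $K$-type and verifying that no two entries coincide after conjugation to a common Weyl chamber; this is made automatic by the fact that Theorem \ref{thm-HP} links the spin-lowest $K$-type to $\Lambda$, so that equality of spin-lowest $K$-types across the list would force coincidences of \texttt{atlas} parameters that are ruled out by construction.

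The principal obstacle is purely computational: $E_{7(-25)}$ has rank $7$ and the number of candidate parameters $(x,\lambda,\nu)$ before filtering is very large. The sharpening in Proposition \ref{prop-HJ}(c) is the decisive reduction, but even after it one must organize the \texttt{atlas} calls carefully, applying the Dirac inequality first as a cheap filter and exploiting the string structure of \cite{D17} so that $H_D$ is computed only for the initial member of each string rather than every point on every ray. The final proof therefore consists of the explicit tables of the $74$ FS-scattered parameters and the $878$ strings, together with the corresponding spin-lowest $K$-types, all obtained and cross-checked by the procedure just outlined.
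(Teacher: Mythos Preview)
Your overall strategy is in the right spirit, but there are several concrete gaps.

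First, you misread the multiplicity-one assertion. The claim is that for each Dirac series $\pi$, every spin-lowest $K$-type of $\pi$ occurs in $\pi$ with multiplicity one; it is \emph{not} a claim that different representations have pairwise distinct spin LKTs. Your proposed check---comparing spin-LKT highest weights across the list and invoking Theorem~\ref{thm-HP} to rule out coincidences---proves the wrong statement, and the argument you give for it is in any case invalid: Theorem~\ref{thm-HP} only says $\Lambda$ is $W(\frg,\frh_f)$-conjugate to $\gamma+\rho_c$, which does not determine the \texttt{atlas} parameter. The paper establishes multiplicity one by direct branching computations (\texttt{print\_branch\_irr\_long}) for each representation found, reading off the multiplicity of each spin LKT.

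Second, your treatment of the strings is too vague and partly wrong. The strings of \cite{D17} are not obtained by letting $\nu$ vary along a continuous ray; they are discrete families indexed by the coordinates of $\Lambda$ outside the support of $x$ running over positive integers. The paper's count of $878$ is obtained by first verifying Conjecture~2.6 of \cite{D21} and the binary condition for $E_{7(-25)}$---i.e., that for each proper support $S$ the relevant unitary representations occur only when the in-support coordinates of $\Lambda$ are $0$ or $1$---and then applying the combinatorial formula $\sum_{i=0}^{6} N_i$ from Section~5 of \cite{D21}. Without this reduction you have no mechanism to turn an infinite family into a finite count.

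Third, two reductions that make the FS-scattered search feasible are missing from your outline: Lemma~\ref{lemma-EVII-HP} (necessary inequalities on the coordinates of $\Lambda$ coming from Theorem~\ref{thm-HP}) and Theorem~\ref{thm-SR} (forcing $\min\{a,\dots,g\}=0$). These, together with Proposition~\ref{prop-HJ}(c), are what cut the candidate set $\Phi$ down to a tractable $178192$ infinitesimal characters; only the $23$ in $\Phi_1$ actually yield FS-scattered Dirac series, and one must verify emptiness for $\Phi_2,\dots,\Phi_{13}$. Finally, the u-smallness of the spin LKTs of the FS-scattered representations is not ``forced'' by Proposition~\ref{prop-HJ}(c); that proposition bounds $\|\nu\|$, not the spin LKT. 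In the paper this is checked directly from the explicit tables.
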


The paper is organized as follows: Section 2 prepares necessary material on cohomological induction and the software \texttt{atlas}. Section 3 reviews the structure of $E_{7(-25)}$. Section 4 further improves the Helgason-Johnson bound. After these preparations, Section 5 pins down the Dirac series of $E_{7(-25)}$. Section 6 aims to look at  certain Dirac series representations more carefully. Section \ref{sec-unip} considers some special unipotent representations of $E_{7(-25)}$, while Section \ref{sec-appendix} lists all its non-trivial FS-scattered Dirac series representations.

\section{Preliminaries}\label{sec-pre}

This section aims to collect necessary preliminaries.

\subsection{Cohomological induction}
Fix an element $H\in i\frt_{f,0}$. Let $\frl$ be the zero eigenspace of ${\rm ad}(H)$, and let $\fru$ be the sum of positive eigenspaces of ${\rm ad}(H)$.
Then $\frq= \frl\oplus\fru$ be  a $\theta$-stable parabolic subalgebra of $\frg$ with Levi factor $\frl$ and nilpotent radical $\fru$. Set $L=N_{G}(\frq)$.

Let us arrange that $\Delta(\fru, \frh_f)\subseteq \Delta^{+}(\frg,\frh_f)$.
Set $\Delta^{+}(\frl, \frh_f)=\Delta(\frl,
\frh_f)\cap \Delta^{+}(\frg,\frh_f)$. Let
$\rho^{L}$  denote the half sum of roots in
$\Delta^{+}(\frl,\frh_f)$,  and denote by $\rho(\fru)$ (resp., $\rho(\fru\cap\frp)$) the half sum of roots in $\Delta(\fru,\frh_f)$ (resp., $\Delta(\fru\cap\frp,\frh_f)$). Then
\begin{equation}\label{relations}
\rho=\rho^{L}+\rho(\fru).
\end{equation}

Let $Z$ be an ($\frl$, $L\cap K$)-module. Cohomological induction functors attach to $Z$ certain ($\frg, K$)-modules $\caL_j(Z)$ and $\caR^j(Z)$, where $j$ is a nonnegative integer.
Suppose that $Z$ has infinitesimal character $\lambda_L\in \frh_f^*$. After \cite{KV}, we say that
$Z$ is {\it good} or {\it in good range} if
\begin{equation}\label{def-good}
\mathrm{Re}\,\langle \lambda_L +\rho(\fru),\, \alpha^\vee \rangle >
0, \quad \forall \alpha\in \Delta(\fru, \frh_f).
 \end{equation} We say that $Z$ is {\it weakly good} if
\begin{equation}\label{def-weakly-good}
\mathrm{Re}\, \langle \lambda_L +\rho(\fru),\, \alpha^\vee \rangle
\geq 0, \quad \forall \alpha\in \Delta(\fru, \frh_f).
\end{equation}

\begin{thm}\label{thm-Vogan-coho-ind}
{\rm (\cite{Vog84} Theorems 1.2 and 1.3, or \cite{KV} Theorems 0.50 and 0.51)}
Suppose the admissible
 ($\frl$, $L\cap K$)-module $Z$ is weakly good.  Then we have
\begin{itemize}
\item[(i)] $\caL_j(Z)=\caR^j(Z)=0$ for $j\neq S(:=\emph{\text{dim}}\,(\fru\cap\frk))$.
\item[(ii)] $\caL_S(Z)\cong\caR^S(Z)$ as ($\frg$, $K$)-modules.
\item[(iii)]  if $Z$ is irreducible, then $\caL_S(Z)$ is either zero or an
irreducible ($\frg$, $K$)-module with infinitesimal character $\lambda_L+\rho(\fru)$.
\item[(iv)]
if $Z$ is unitary, then $\caL_S(Z)$, if nonzero, is a unitary ($\frg$, $K$)-module.
\item[(v)] if $Z$ is in good range, then $\caL_S(Z)$ is nonzero, and it is unitary if and only if $Z$ is unitary.
\end{itemize}
\end{thm}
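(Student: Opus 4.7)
The plan is to follow the classical Vogan--Zuckerman framework, working with the derived functors in the correct normalization. Recall that one defines $\caR^j(Z):=\Gamma^j(\pro_{\bar\frq, L\cap K}^{\frg, L\cap K}(Z\otimes\wedge^{\text{top}}\fru))$ using Zuckerman's right derived functors $\Gamma^j$, and $\caL_j(Z):=\Pi_j(\ind_{\frq, L\cap K}^{\frg, L\cap K}(Z\otimes\wedge^{\text{top}}\fru))$ using Bernstein's left derived functors $\Pi_j$, both carrying the $\rho(\fru)$ shift built into the coefficient twist. The first task is to make (i) and (ii) simultaneous: I would appeal to the Hard Duality theorem of Enright--Wallach, which asserts a natural isomorphism $\caR^{j}(Z)\cong\caL_{2S-j}(Z)$ (with appropriate compatibilities on infinitesimal characters) whenever $Z$ is admissible. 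Once this is in hand, vanishing and the middle-degree isomorphism collapse to proving vanishing on one side.

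For the vanishing step, I would use the Hochschild--Serre spectral sequence for the pair $(\frk,\frl\cap\frk)$, whose $E_2$-term is computed via Kostant's theorem on $\fru\cap\frk$-cohomology. The $\widetilde{K}$-types of $\caR^j(Z)$ are controlled by wedges of $\fru\cap\frk$ twisted by $\rho(\fru)$, and the infinitesimal character of any contribution on the $j$-th level has the form $w(\lambda_L+\rho(\fru))$ for a Weyl element $w$ of specified length. The weakly good hypothesis (\ref{def-weakly-good}) forces $\lambda_L+\rho(\fru)$ to lie in the closed dominant chamber for $\Delta(\fru,\frh_f)$, which rules out all non-middle contributions; combined with Hard Duality this yields parts (i) and (ii). The good-range non-vanishing in (v) then follows from the \emph{bottom-layer $K$-type} argument: in the good range, the $L\cap K$-type of $Z$ with minimal norm produces, via Frobenius reciprocity and the fact that strict inequality in (\ref{def-good}) keeps us inside a Weyl chamber, a non-zero $K$-type in $\caL_S(Z)$.

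For irreducibility (iii), I would apply the functor $H^S(\bar\fru,\cdot)$ (Hecht--Schmid duality, or Casselman--Osborne) to an arbitrary submodule of $\caL_S(Z)$ and show that it produces a non-zero submodule of the irreducible $Z$; weakly good is needed to ensure that no infinitesimal-character clashes introduce extra terms in the Casselman--Osborne spectral sequence, leaving only the expected single constituent with infinitesimal character $\lambda_L+\rho(\fru)$. The existence of at least one constituent whose infinitesimal character matches is forced by the standard computation of infinitesimal characters on $u$-homology, and this already pins down the full infinitesimal character on $\caL_S(Z)$.

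Finally, for unitarity (iv), I would use Vogan's construction of an invariant Hermitian form on $\caL_S(Z)$ from the form on $Z$: the form is built on the $\pro$-module level, descends to the derived functor via the natural pairing $\caL_S(Z)\times\caR^S(Z^{h})\to\bbC$, and by Hard Duality becomes a Hermitian pairing on $\caL_S(Z)$. Signature positivity in the weakly good range is the deepest point; it is established by a reduction to the good-range case (where strict dominance makes the signature calculation match that of the bottom-layer $K$-type), combined with a limit/continuity argument on the parameter $\lambda_L$. The main obstacle I anticipate is exactly this unitarity preservation: controlling the signature of the Shapovalov-type form across the weakly good boundary requires the full Wallach/Vogan machinery and is by far the most delicate piece of the five.
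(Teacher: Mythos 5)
The paper does not prove Theorem~\ref{thm-Vogan-coho-ind}: it is quoted with an explicit citation to \cite{Vog84} (Theorems 1.2 and 1.3) and \cite{KV} (Theorems 0.50 and 0.51), so there is no ``paper's own proof'' to compare against. Your sketch is therefore being measured against the standard arguments in those references, and on that score it is broadly in the right place: the normalization of $\caL_j,\caR^j$ with the $\wedge^{\mathrm{top}}\fru$ twist, Hard Duality (which, together with the admissibility hypothesis needed to identify $\ind$ with $\pro$, collapses (i) and (ii) to a one-sided vanishing), infinitesimal-character vanishing in the weakly good range, bottom-layer $K$-types for non-vanishing in the good range, and the signature theorem for unitarity preservation are indeed the key ingredients in Knapp--Vogan.

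The place where the sketch has a genuine gap is part (iii). Applying $H^S(\bar\fru,\cdot)$ (or $\fru$-homology in degree $S$) to a submodule $A\hra\caL_S(Z)$ and chasing Casselman--Osborne only tells you that every irreducible subquotient of $\caL_S(Z)$ has infinitesimal character $\lambda_L+\rho(\fru)$; it does not by itself show there is at most one such constituent, nor that the short exact sequence splits into $A=0$ or $A=\caL_S(Z)$. To conclude irreducibility one needs the stronger statement that $\fru$-homology of any subquotient of $\caL_S(Z)$ is concentrated in degree $S$ and lands as a subquotient of $Z$ with controlled multiplicity, which is exactly the content of the transfer/adjunction theorems in \cite{KV} (and in Vogan's original treatment is proved via the signature and bottom-layer analysis, not merely by an infinitesimal-character count). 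Without that input, your argument would equally well apply to a direct sum $\caL_S(Z)\oplus\caL_S(Z)$, so it cannot yield irreducibility on its own. I would also flag that the ``limit/continuity argument on $\lambda_L$'' you invoke for the weakly-good unitarity in (iv) is doing substantial hidden work: the correct statement is a signature-preservation theorem across walls (Vogan's unitarizability theorem), and a naive continuity argument can fail precisely at the weakly-good boundary where the form may acquire a radical.
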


In the special case that $Z$ is a one-dimensional unitary character $\bbC_{\lambda}$, the module $\caL_S(Z)$ will be called an $A_{\frq}(\lambda)$ module. It is said to be \emph{fair} if
\begin{equation}\label{Aqlambda-fair}
\langle \lambda+\rho(\fru), \alpha \rangle > 0, \quad \forall \alpha\in \Delta(\fru, \frh_f),
\end{equation}
and to be \emph{weakly fair} if
\begin{equation}\label{Aqlambda-weakly-fair}
\langle \lambda+\rho(\fru), \alpha \rangle \geq 0, \quad \forall \alpha\in \Delta(\fru, \frh_f);
\end{equation}
Take $\Lambda\in\frh_f^*$ such that it is dominant for $\Delta^+(\frg, \frh_f)$.
We say that $\Lambda$ is \emph{real} if it belongs to $i\frt_{f, 0}^* +\fra_{f, 0}^*$,
and $\Lambda$ is {\it strongly regular} if
\begin{equation}\label{def-weakly-good}
\langle \Lambda-\rho,\, \alpha^\vee \rangle
\geq 0, \quad \forall \alpha\in \Delta^+(\frg, \frh_f).
\end{equation}

The following result says that a large part of $\widehat{G}$ consists of $A_{\frq}(\lambda)$ modules.

\begin{thm}\label{thm-SR} \emph{(Salamanca-Riba \cite{Sa})}
Let $\pi$ be an irreducible $(\frg, K)$-module with a strongly regular real infinitesimal character. If $\pi$ is unitary, then it is an $A_{\frq}(\lambda)$ module in the good range.
\end{thm}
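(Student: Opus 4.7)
The plan is to combine Vogan's lowest $K$-type theory with Parthasarathy's Dirac operator inequality to produce a $\theta$-stable parabolic $\frq$ from which $\pi$ is cohomologically induced, and then iterate on the Levi until only a unitary character remains on the smallest relevant subgroup. The strongly regular hypothesis is what forces each inductive step to land in the good range.

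First, I would attach to $\pi$ a lowest $K$-type $\mu$ (in Vogan's sense); Vogan's construction canonically produces a $\theta$-stable parabolic $\frq=\frl\oplus\fru$ with $\mu|_{L\cap K}$ a fine $K$-type of $L$, together with a realization of $\pi$ as (a subquotient of) $\caL_S(Z)$ for some irreducible $(\frl,L\cap K)$-module $Z$. By Theorem \ref{thm-Vogan-coho-ind}(iii), the infinitesimal character $\lambda_L$ of $Z$ satisfies $\lambda_L+\rho(\fru)=\Lambda$ after Weyl-group conjugation. Using the decomposition $\rho=\rho^L+\rho(\fru)$ from \eqref{relations}, the strong regularity inequality $\langle\Lambda-\rho,\alpha^\vee\rangle\geq 0$ for $\alpha\in\Delta^+(\frg,\frh_f)$ rearranges to $\langle\lambda_L+\rho(\fru),\alpha^\vee\rangle > 0$ for every $\alpha\in\Delta(\fru,\frh_f)$; that is, $Z$ sits in the good range \eqref{def-good}.

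Second, I would use Parthasarathy's inequality \eqref{Dirac-inequality} to force $Z$ to be unitary. Because $\pi$ is unitary and realized as $\caL_S(Z)$, a compatibility between the Dirac operators of $\frg$ and of $\frl$ (applied to the spin double cover of $L\cap K$ inside $\widetilde K$) transports Parthasarathy's inequality for $\pi$ down to a similar inequality on $Z$. The strongly regular hypothesis gives enough slack to conclude that $Z$ itself is unitary, with the strongly regular real infinitesimal character $\lambda_L$ on the smaller group $L$. Then Theorem \ref{thm-Vogan-coho-ind}(v) ensures that $\caL_S(Z)\neq 0$ and is unitary, matching $\pi$ via the infinitesimal character.

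Third, I would iterate: $Z$ is an irreducible unitary $(\frl,L\cap K)$-module with strongly regular real infinitesimal character, so by induction on $\dim\frg$ (with base case where $\frl$ already equals a fundamental Cartan subalgebra, so unitary irreducibles are characters) one may assume $Z$ is cohomologically induced in the good range from a unitary character $\bbC_\lambda$ of a still smaller Levi. Transitivity of cohomological induction in the good range then presents $\pi$ as a single $A_\frq(\lambda)$ module in the good range. The main obstacle is the second step: although strong regularity of $\Lambda$ directly implies the good-range position of $\lambda_L$, showing that the abstractly produced inducing module $Z$ is itself unitary (not merely admissible) is delicate, and requires exploiting the tight slack furnished by strong regularity in Parthasarathy's inequality to rule out every candidate $Z$ other than a unitary one. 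Salamanca-Riba's achievement is precisely to pin down this compatibility.
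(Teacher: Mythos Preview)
The paper does not prove Theorem~\ref{thm-SR}; it is quoted from Salamanca-Riba \cite{Sa} and used throughout as a black box (for instance in Section~\ref{sec-FS-EVII} to justify the condition $\min\{a,\dots,g\}=0$). There is therefore no in-paper argument to compare your proposal against.

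Your sketch does capture the coarse architecture of Salamanca-Riba's actual proof: build a $\theta$-stable parabolic from a lowest $K$-type, observe that strong regularity forces the good range, and use Parthasarathy's Dirac inequality to control the inducing module $Z$. Two remarks, however. First, the passage ``$\langle\Lambda-\rho,\alpha^\vee\rangle\ge 0$ rearranges to $\langle\lambda_L+\rho(\fru),\alpha^\vee\rangle>0$'' is correct but hides the point: since $\Lambda=\lambda_L+\rho(\fru)$ and $\Lambda-\rho=\lambda_L-\rho^L$, strong regularity gives $\langle\Lambda,\alpha^\vee\rangle\ge\langle\rho,\alpha^\vee\rangle>0$ for every positive $\alpha$, hence in particular for $\alpha\in\Delta(\fru,\frh_f)$; you should make sure your choice of positive system has $\Delta(\fru)\subset\Delta^+$ and that $\Lambda$ is dominant there, otherwise the Weyl conjugation you mention spoils the inequality. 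Second, Salamanca-Riba does \emph{not} iterate down a chain of Levis as you propose in step three. She shows in a single step that $Z$ is a one-dimensional unitary character of $L$: the lowest $(L\cap K)$-type of $Z$ is fine by construction, so $Z$ is a Langlands subquotient of a minimal principal series of $L$, and the Dirac inequality on $\pi$ (applied to the bottom-layer $K$-types, not transported to $L$ via any ``compatibility of Dirac operators'') together with strong regularity forces the continuous parameter of that principal series to vanish. Your step two, as written, is the weak point: the phrase ``transports Parthasarathy's inequality for $\pi$ down to a similar inequality on $Z$'' is not how the argument runs and would need substantial justification on its own.
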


Example \ref{exam-FS-Aqlambda} suggests that $A_{\frq}(\lambda)$ modules should continue to play an important role in the unitary dual for \emph{singular} infinitesimal characters.

\subsection{The \texttt{atlas} software}\label{sec-atlas}
Let us recall necessary notation from \cite{ALTV} regarding the Langlands parameters in the software \texttt{atlas} \cite{At}. The recent seminar \cite{Vog22} is also a good reference.
Let $H_{\bbC}$ be a \emph{maximal torus} of $G_{\bbC}$. That is, $H_{\bbC}$ is a maximal connected abelian subgroup of $G_{\bbC}$ consisting of diagonalizable matrices. Note that $H_{\bbC}$ is complex connected reductive algebraic. Its \emph{character lattice} is the group of algebraic homomorphisms
$$
X^*:={\rm Hom}_{\rm alg} (H_{\bbC}, \bbC^{\times}).
$$
Choose a Borel subgroup $B_{\bbC} \supset H_{\bbC}$.
In \texttt{atlas}, an irreducible $(\frg, K)$-module $\pi$ is parameterized by a \emph{final} parameter $p=(x, \lambda, \nu)$ via the Langlands classification \cite{ALTV}, where $\lambda \in X^*+\rho$, $\nu \in (X^*)^{-\theta}\otimes_{\bbZ}\bbC$, and $x$ is a KGB element. That is, $x$ is an $K_{\bbC}$-orbit in the Borel variety $G_{\bbC}/B_{\bbC}$. The infinitesimal character of $\pi$ is represented by
\begin{equation}\label{inf-char-atlas}
\frac{1}{2}(1+\theta)\lambda +\nu.
\end{equation}
Note that the Cartan involution $\theta$ now becomes $\theta_x$---the involution of $x$, which is given by the command \texttt{involution(x)} in \texttt{atlas}.

Among other things, Paul's lecture \cite{Paul} carefully explains how to do cohomological induction in \texttt{atlas}. In particular, the following canonical way is most important for us.

\begin{thm}\label{thm-Vogan} \emph{(Vogan \cite{Vog84})}
 Let $p=(x, \lambda, \nu)$ be the \texttt{atlas} parameter of an irreducible $(\frg, K)$-module $\pi$.
Let $S$ be the support of $x$, and $\frq(x)$ be the $\theta$-stable parabolic subalgebra given by the pair $(S, x)$, with Levi factor $L(x)$.
Then  $\pi$ is cohomologically induced, in the weakly good range,
from an irreducible $(\frl, L\cap K)$-module $\pi_{L(x)}$ with parameter $p_L=(y, \lambda-\rho(\fru),\nu)$, where $y$ is the KGB element of $L(x)$ corresponding to $x$.
\end{thm}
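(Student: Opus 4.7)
The plan is to construct the $\theta$-stable parabolic $\frq(x)$ explicitly from the data $(S,x)$ and then verify the weakly good hypotheses of Theorem \ref{thm-Vogan-coho-ind}. First I would unpack the notion of support: in the \texttt{atlas} encoding, $S$ consists of those simple root indices $i$ such that the simple reflection at $\alpha_i$ enters (either as a Cayley transform or a non-trivial cross action) along some path from a base KGB element to $x$. The complementary indices label simple roots that behave ``trivially'' at $x$ under $\theta_x$, and these are exactly the directions that must contribute to $\fru$. Concretely, I would choose $H\in i\frt_{f,0}$ whose centralizer in $\frg$ is the Levi $\frl(x)$ generated by $\frh_f$ together with the root spaces indexed by $S$, and whose positive eigenspace is $\fru$. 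By construction $\frq(x)$ is $\theta$-stable, and the restriction of the $K_{\bbC}$-orbit $x$ to the flag variety of $L(x)$ produces a KGB element $y$ with full support in $L(x)$.

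Second, I would set $p_L=(y,\lambda-\rho(\fru),\nu)$ and verify that this is a final parameter for $L(x)$. The $-\rho(\fru)$ shift is dictated by cohomological induction: if $\pi_{L(x)}$ has infinitesimal character $\lambda_L$, then by \eqref{relations} the induced module's infinitesimal character equals $\lambda_L+\rho(\fru)$, and after \eqref{inf-char-atlas} this matches that of $\pi$. I would then check the weakly good range inequality appearing in Theorem \ref{thm-Vogan-coho-ind}. Since $\lambda_L+\rho(\fru)$ agrees with the infinitesimal character of $\pi$, dominance on $\Delta(\fru,\frh_f)$ reduces to the fact that roots in $\fru$ lie in the positive eigenspace of $H$, so their pairing with the $\frh_f$-component of the infinitesimal character is nonnegative; the real-part condition on $\nu$ is ensured by the Langlands condition built into the word ``final.''

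Finally, I would invoke Theorem \ref{thm-Vogan-coho-ind}(iii)--(iv): $\caL_S(\pi_{L(x)})$ is either zero or an irreducible $(\frg,K)$-module with the correct infinitesimal character. Non-vanishing and the identification $\caL_S(\pi_{L(x)})\cong\pi$ would follow from the Langlands classification on $G$: the standard module built by cohomological induction from $L(x)$ has a unique irreducible Langlands subquotient characterized by the lowest $K$-type data encoded in $(x,\lambda,\nu)$, and by the uniqueness part of the classification this subquotient must be $\pi$. The hard part will be the bookkeeping between Vogan's original formulation in \cite{Vog84} and the \texttt{atlas} normalizations---in particular, verifying that $y$ is indeed the KGB element on $L(x)$ corresponding to $x$, and that $\lambda-\rho(\fru)$ lands in the appropriate lattice so that $p_L$ is a legitimate final parameter. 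Once these conventions are aligned, the weakly good check and the appeal to Theorem \ref{thm-Vogan-coho-ind} are straightforward.
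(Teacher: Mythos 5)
The paper does not prove this theorem at all: it is stated as a known result of Vogan's 1984 work, with Paul's \texttt{atlas} lecture notes \cite{Paul} cited as the source for the translation into \texttt{atlas} language, and the command \texttt{reduce\_good\_range} given as its computational incarnation. So there is no internal proof to compare against, and any evaluation must be of your sketch on its own terms.

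On those terms, your sketch correctly identifies the three moves one must make (build $\frq(x)$ from the support, exhibit the inducing parameter $p_L$, and invoke Theorem \ref{thm-Vogan-coho-ind}), but it has a real gap at the weakly good step. You write that ``dominance on $\Delta(\fru,\frh_f)$ reduces to the fact that roots in $\fru$ lie in the positive eigenspace of $H$, so their pairing with the $\frh_f$-component of the infinitesimal character is nonnegative.'' That inference is invalid: being in the positive eigenspace of $H$ means $\langle H,\alpha\rangle>0$, which says nothing about $\langle\lambda_L+\rho(\fru),\alpha^\vee\rangle$. The element $H$ defining the parabolic and the infinitesimal character $\Lambda$ are a priori unrelated vectors in $\frt_f^*$, and indeed for an arbitrary choice of $\theta$-stable $\frq$ the weakly good inequality \eqref{def-weakly-good} fails badly. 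What actually makes it hold is the specific way \texttt{atlas} builds $\frq(x)$ from $\texttt{support}(x)$ together with the normalization constraints on a \emph{final} parameter $(x,\lambda,\nu)$ (dominance of $\lambda$ for $\theta_x$-imaginary roots, the conditions defining ``standard'' and ``final''). Deriving weak goodness from those constraints is the genuine content of the claim and needs to be made explicit.

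A second, smaller gap: Theorem \ref{thm-Vogan-coho-ind}(iii) permits $\caL_S(\pi_{L(x)})=0$ in the weakly good range, so the identification $\caL_S(\pi_{L(x)})\cong\pi$ is not merely a matter of ``uniqueness in the Langlands classification.'' One must first show the induced module is nonzero. The standard route is to check first that cohomological induction carries the standard module with parameter $p_L$ on $L(x)$ to the standard module with parameter $p$ on $G$ (induction in stages), then use vanishing of $\caL_j$ for $j\neq S$ to transfer the Langlands cosocle/socle across the functor; the bottom-layer map on the lowest $K$-type then gives nonvanishing. Your sentence about the ``unique irreducible Langlands subquotient characterized by the lowest $K$-type data'' gestures at this but conflates applying $\caL_S$ to the irreducible $\pi_{L(x)}$ with applying it to the standard module. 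Spelling out that exactness/vanishing argument is what would close the proof.
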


The \texttt{atlas} command for finding the $\pi_{L(x)}$ for any given $\pi$ is \texttt{reduce\_good\_range}.

\begin{example}\label{exam-atlas}
Let us look at the irreducible representations of $SL(2, \bbR)$ with infinitesimal character $\rho$.
As we shall see, there are four representations in total, among which three are unitary.
\begin{verbatim}
G:SL(2,R)
rho(G)
Value: [ 1 ]/1
set all=all_parameters_gamma(G,[1])
#all
Value: 4
void: for p in all do prints(p, "  ",is_unitary(p)) od
final parameter(x=2,lambda=[1]/1,nu=[1]/1)  true
final parameter(x=2,lambda=[2]/1,nu=[1]/1)  false
final parameter(x=1,lambda=[1]/1,nu=[0]/1)  true
final parameter(x=0,lambda=[1]/1,nu=[0]/1)  true
\end{verbatim}
The first one is the trivial representation. Now let us look at the third representation in \texttt{all}. By Theorem \ref{thm-SR}, it is an $A_{\frq}(\lambda)$ module.
\begin{verbatim}
set p=all[2]
p
Value: final parameter(x=1,lambda=[1]/1,nu=[0]/1)
set (Q,q)=reduce_good_range(p)
Q
Value: ([],KGB element #1)
Levi(Q)
Value: compact connected quasisplit real group with Lie algebra 'u(1)'
q
Value: final parameter(x=0,lambda=[0]/1,nu=[0]/1)
dimension(q)
Value: 1
goodness(q,G)
Value: "Good"
rho_u(Q)+infinitesimal_character(q)=infinitesimal_character(p)
Value: true
\end{verbatim}
We see that \texttt{p} is actually an $A_{\frb}(\lambda)$ module. Actually, it is a discrete series.\hfill\qed
\end{example}

\section{The structure of $E_{7(-25)}$}\label{sec-structure}

We will fix the group $G$ as \texttt{E7\_h} in \texttt{atlas} henceforth. This connected equal rank group has center $\bbZ/2\bbZ$. It is not simply connected. Note that $(G, K)$ is a Hermitian symmetric pair. The Lie algebra $\frg_0$ is labelled as EVII in \cite[Appendix C]{Kn}. We present
the Vogan diagram for $\frg_0$ in Fig.~\ref{Fig-EVII-Vogan}, where $\alpha_1=\frac{1}{2}(1, -1,-1,-1,-1,-1,-1,1)$, $\alpha_2=e_1+e_2$ and $\alpha_i=e_{i-1}-e_{i-2}$ for $3\leq i\leq 7$.  Let $\zeta_1, \dots, \zeta_7\in\frt_f^*$ be the corresponding fundamental weights for $\Delta^+(\frg, \frt_f)$, where $\frt_f\subset \frk$ is the fundamental Cartan subalgebra of $\frg$. The dual space $\frt_f^*$ will be identified with $\frt_f$ under the form $B(\cdot, \cdot)$. Put
\begin{equation}
\zeta:=\zeta_7=(0, 0, 0, 0, 0, 1, -\frac{1}{2}, \frac{1}{2}).
\end{equation}
Note that
$$
\rho=(0, 1, 2, 3, 4, 5, -\frac{17}{2}, \frac{17}{2}).
$$
We will use $\{\zeta_1, \dots, \zeta_7\}$ as a basis to express the \texttt{atlas} parameters $\lambda$, $\nu$ and the infinitesimal character $\Lambda$. More precisely, in such cases, $[a, b, c, d, e, f, g]$ will stand for the vector $a\zeta_1+\cdots+g \zeta_7$. For instance, the trivial representation of $E_{7(-25)}$ has infinitesimal character $\rho=[1,1,1,1,1,1,1]$.

\begin{figure}[H]
\centering
\scalebox{0.65}{\includegraphics{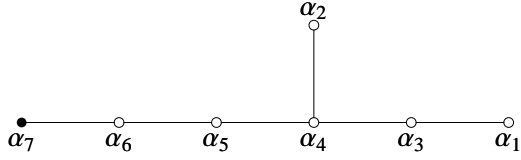}}
\caption{The Vogan diagram for $E_{7(-25)}$}
\label{Fig-EVII-Vogan}
\end{figure}

Denote by $\gamma_i=\alpha_{i}$ for $1\leq i\leq 6$.
 Let $\frt_f^-$ be the real linear span of $\gamma_1, \dots, \gamma_6$, which are the simple roots for $\Delta^+(\frk, \frt_f^-)$---the positive system for $\frk$ obtained from $\Delta^+(\frg, \frt_f)$ by restriction. We present the Dynkin diagram of $\Delta^+(\frk, \frt_f^-)$ in Fig.~\ref{Fig-EVII-Dynkin}.
Let $\varpi_1, \dots, \varpi_6\in (\frt_f^-)^*$ be the corresponding fundamental weights.
Note that $\bbR\zeta$ is the one-dimensional center of $\frk$,  that
$$
\frt_f=\frt_f^- \oplus \bbR\zeta,
$$
and that
$$
\rho_c=(0, 1, 2, 3, 4, -4, -4, 4).
$$
Moreover, we have the decomposition
\begin{equation}
\frp=\frp^+ \oplus \frp^-
\end{equation}
as $\frk$-modules, where $\frp^+$ has highest weight
$$
\beta:=2\alpha_1+ 2\alpha_2+ 3\alpha_3+ 4\alpha_4+ 3\alpha_5 +2\alpha_6 +\alpha_7=
(0, 0, 0, 0, 0, 0, -1, 1)
$$
and $\frp^-$ has highest weight $-\alpha_7$. Both of them have dimension $27$. Thus
$$
-\dim \frk + \dim\frp=-79+54=-25.
$$
This is how the number $-25$ enters the name $E_{7(-25)}$, see \cite{He}.

\begin{figure}[H]
\centering
\scalebox{0.6}{\includegraphics{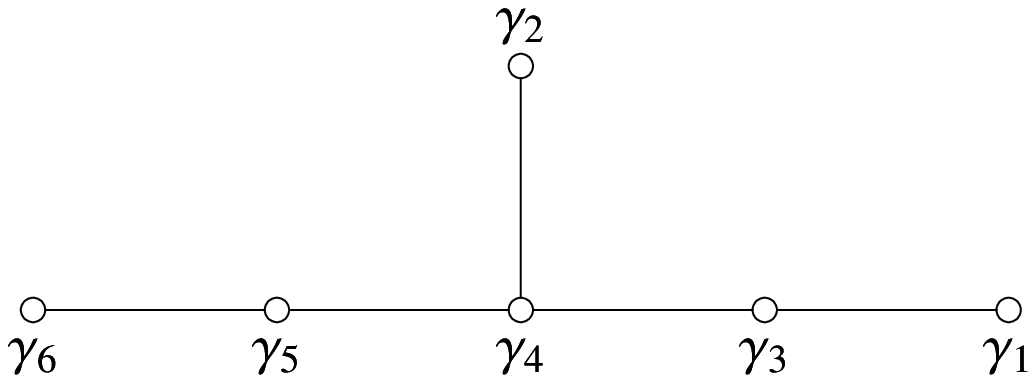}}
\caption{The Dynkin diagram for $\Delta^+(\frk, \frt_f^-)$}
\label{Fig-EVII-Dynkin}
\end{figure}

Let $E_{\mu}$ be the $\frk$-type with highest weight $\mu$.
We will use $\{\varpi_1, \dots, \varpi_6, \frac{1}{3}\zeta\}$ as a basis to express $\mu$.
Namely, in such a case, $[a, b, c, d, e, f, g]$ stands for the vector $a \varpi_1+b \varpi_2 + c \varpi_3+d \varpi_4 +e \varpi_5 +f \varpi_6+ \frac{g}{3}\zeta$. For instance,
\begin{equation}\label{beta-alpha7}
\beta=[1,0,0,0,0,0,2], \quad -\alpha_7=[0,0,0,0,0,1,-2],
\end{equation}
and $\rho_c=[1,1,1,1,1,1,0]$. The $\frk$-type $E_{[a, b, c, d, e, f, g]}$ has lowest weight $[-f, -b, -e, -d, -c, -a, g]$. Therefore, $E_{[f, b, e, d, c, a, -g]}$ is the \emph{contragredient $\frk$-type} of $E_{[a, b, c, d, e, f, g]}$.

For $a, b,c ,d ,e, f\in\bbZ_{\geq 0}$ and  $g\in\bbZ$, we have that $E_{[a, b, c, d, e, f, g]}$ is  a $K$-type if and only if
\begin{equation}\label{EIII-K-type}
-\frac{2 a}{3}-b-\frac{4 c}{3}-2 d-\frac{5 e}{3}-\frac{4 f}{3}+\frac{g}{3} \in\bbZ.
\end{equation}
Abusing the notation a bit, we may refer to a $\frk$-type (or $K$-type, $\widetilde{K}$-type) simply by its highest weight in terms of $\{\varpi_1, \dots, \varpi_6, \frac{1}{3}\zeta\}$.

\begin{lemma}\label{lemma-EVII-HP}
Let $\Lambda=a\zeta_1+b\zeta_2+c\zeta_3+d\zeta_4+e\zeta_5+f \zeta_6+g \zeta_7$ be the infinitesimal character of any Dirac series representation $\pi$ of $E_{7(-25)}$ which is dominant with respect to $\Delta^+(\frg, \frt_f)$. Then $a$, $b$, $c$, $d$, $e$, $f$, $g$ must be non-negative integers such that
\begin{equation}\label{HP-EVII-1}
a+c>0,   b+d>0,   c+d>0,   d+e>0,  e+f>0, f+g>0,
\end{equation}
and that
\begin{equation}\label{HP-EVII-2}
a+b+e>0, a+b+f>0, a+b+g>0, a+d+f>0, a+d+g>0,
\end{equation}
and that
\begin{equation}\label{HP-EVII-3}
a+e+g>0, b+c+e>0, b+c+f>0, b+c+g>0, c+e+g>0.
\end{equation}
\end{lemma}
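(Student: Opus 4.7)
The plan is to combine Theorem~\ref{thm-HP}---Vogan's conjecture proved by Huang and Pand\v zi\'c---with a combinatorial analysis of sub-root systems of $\Delta(\frg,\frt_f)$. By Theorem~\ref{thm-HP}, there exist $w\in W(\frg,\frt_f)$ and a highest weight $\gamma\in\frt_f^{*}$ of a $\widetilde K$-type in $H_D(\pi)$ with $\Lambda=w(\gamma+\rho_c)$. Converting the coordinates of $\rho_c$ recalled in Section~\ref{sec-structure} into the $\zeta_i$ basis gives $\rho_c=[1,1,1,1,1,1,-8]$; together with the fact that highest weights of $\widetilde K$-types lie in $\bigoplus_{i=1}^{7}\bbZ\zeta_i$ (routine from the description of $\widetilde K$-types in Section~\ref{sec-structure}), this forces $\gamma+\rho_c$, and hence its $W(\frg,\frt_f)$-translate $\Lambda$, into $\bigoplus_{i=1}^{7}\bbZ\zeta_i$. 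The dominance hypothesis then yields $a,b,c,d,e,f,g\in\bbZ_{\geq 0}$.

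The next step is to exploit that $\gamma+\rho_c$ is \emph{strictly} $\frk$-dominant: $\gamma$ is $\frk$-dominant (as a highest weight) and $\langle\rho_c,\gamma_i^{\vee}\rangle=1$ for every compact simple root $\gamma_i$, so $\langle\gamma+\rho_c,\alpha^{\vee}\rangle\geq 1$ for every $\alpha\in\Delta^{+}(\frk,\frt_f)$. Setting
\[
Z(\Lambda):=\{\alpha\in\Delta(\frg,\frt_f)\,:\,\langle\Lambda,\alpha^{\vee}\rangle=0\},
\]
this translates into the requirement that some $W(\frg,\frt_f)$-translate of the sub-root system $Z(\Lambda)$ lies entirely inside the non-compact root set $\Delta(\frp,\frt_f)$.

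Suppose $a_i=a_j=0$ for two simple roots $\alpha_i,\alpha_j$ adjacent in the $E_7$ Dynkin diagram. Then $Z(\Lambda)$ contains the $A_2$-subsystem generated by $\alpha_i,\alpha_j$. However, every non-compact root of $E_{7(-25)}$ has $\alpha_7$-coefficient $\pm 1$, so the sum of two positive non-compact roots has $\alpha_7$-coefficient $\pm 2$ and is never a root; no $A_2$-subsystem of $\Delta(\frg,\frt_f)$ can therefore sit inside $\Delta(\frp,\frt_f)$. This contradicts the preceding paragraph and produces the pair inequalities \eqref{HP-EVII-1}. An analogous argument applied to the $A_1^{3}$-subsystems generated by the eleven mutually non-adjacent triples of simple roots handles the triples: exactly one triple---namely $\{\alpha_2,\alpha_5,\alpha_7\}$---admits a $W(\frg,\frt_f)$-translate contained in $\Delta(\frp,\frt_f)$, realised by three mutually strongly orthogonal non-compact positive roots coming from the rank-$3$ Hermitian symmetric structure of $E_{7(-25)}$. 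The remaining ten triples produce \eqref{HP-EVII-2} and \eqref{HP-EVII-3}.

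The main technical step is this triple analysis: for each of the eleven mutually non-adjacent triples of simple roots, deciding whether the corresponding $A_1^{3}$-subsystem can be transported into $\Delta(\frp,\frt_f)$ by $W(\frg,\frt_f)$. This is a finite root-system computation that one can carry out by hand using the explicit coordinates of Section~\ref{sec-structure}, but it is cleanest to let \texttt{atlas} enumerate the $W(E_7)$-orbits of $A_1^{3}$-subsystems and intersect each with $\Delta(\frp,\frt_f)$.
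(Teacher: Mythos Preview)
Your argument is correct and takes a genuinely different route from the paper's proof. The paper proceeds by direct enumeration: assuming, for instance, $a=c=0$, it runs through all $56$ coset representatives $w\in W(\frg,\frt_f)^{1}$ and checks that $w\Lambda$, written in the basis $\{\varpi_1,\dots,\varpi_6,\tfrac{1}{3}\zeta\}$, always has a zero among its first six coordinates; hence $w\Lambda-\rho_c$ is never $\frk$-dominant and no equation $\{\mu-\rho_n^{(j)}\}+\rho_c=w\Lambda$ can hold for a $K$-type $\mu$. The remaining fifteen inequalities are dispatched by the same mechanical check. For the integrality of $\Lambda$ the paper simply invokes the linearity of \texttt{E7\_h} via Remark~4.1 of \cite{D21}.

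Your approach instead extracts the structural constraint---that the singular sub-root system $Z(\Lambda)$ must be $W(\frg,\frt_f)$-conjugate to a subset of $\Delta(\frp,\frt_f)$---and then asks which sub-root systems can satisfy it. This has the virtue of \emph{explaining} the shape of the answer: the six pair inequalities in \eqref{HP-EVII-1} correspond exactly to the six edges of the $E_7$ Dynkin diagram (where an $A_2$ would appear in $Z(\Lambda)$, ruled out cleanly by your $\alpha_7$-coefficient argument), and the ten triple inequalities in \eqref{HP-EVII-2}--\eqref{HP-EVII-3} correspond to the ten non-adjacent simple triples whose $A_1^{3}$ lies in the $W(E_7)$-orbit that does not meet $\Delta(\frp,\frt_f)$, the lone exception $\{\alpha_2,\alpha_5,\alpha_7\}$ being conjugate to the three Harish-Chandra strongly orthogonal non-compact roots that witness the real rank~$3$ of $E_{7(-25)}$. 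The trade-off is that your last step---sorting the eleven $A_1^{3}$'s into their $W(E_7)$-orbits and identifying which orbit meets $\Delta(\frp,\frt_f)$---is itself a finite check of size comparable to the paper's, so neither argument is computation-free; yours is more conceptual and transplants readily to other Hermitian symmetric cases, while the paper's is more immediately machine-verifiable.

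One small tightening: the sentence asserting that highest weights of $\widetilde K$-types lie in $\bigoplus_{i}\bbZ\zeta_i$ is true here but deserves a word of justification. Since $\rho_n=9\zeta_7$ satisfies the $K$-type condition \eqref{EIII-K-type}, the spin module $S_G$ descends to $K$; every $\widetilde K$-type in $\pi\otimes S_G$ therefore has highest weight congruent to $\mu+\rho_n$ modulo the root lattice for some $K$-type $\mu$ of $\pi$, and \eqref{EIII-K-type} forces $\mu$, hence $\gamma+\rho_c$, into $\bigoplus_i\bbZ\zeta_i$.
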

\begin{proof}
Since the group \texttt{E7\_h} is linear, it follows from Remark 4.1 of \cite{D21} that $a$, $b$, $c$, $d$, $e$, $f$, $g$ must be non-negative integers.

Now if $a+c=0$, i.e., $a=c=0$,  a direct check says that for any $w\in W(\frg, \frt_f)^1$, at least one of the first six coordinates of $w\Lambda$ in terms of the basis $\{\varpi_1, \dots, \varpi_6, \frac{\zeta}{3}\}$ vanishes. Therefore,
$$
\{\mu-\rho_n^{(j)}\} + \rho_c =w \Lambda
$$
could not hold for any $K$-type $\mu$. This proves that $a+c>0$. Other inequalities can be similarly deduced.
\end{proof}

\section{Sharpening the the Helgason-Johnson bound for $E_{7(-25)}$}\label{sec-HJ}

Recall that $G$ is the group \texttt{E7\_h} in \texttt{atlas}. Moreover, we have fixed a Vogan diagram for $\frg_0$ in Fig.~\ref{Fig-EVII-Vogan}. By doing this, we have actually fixed a positive root system $\Delta^+(\frg, \frh_f)$. Recall that $\frh_f=\frt_f$. We start with $(\Delta^+)^{(0)}(\frg, \frt_f):=\Delta^+(\frg, \frh_f)$ which contains the $\Delta^+(\frk, \frt_f)$ corresponding to Fig.~\ref{Fig-EVII-Dynkin}. We fix this $\Delta^+(\frk, \frt_f)$ once for all.
There are $56$ ways of choosing  positive root systems for $\Delta(\frg, \frt_f)$ containing the $\Delta^+(\frk, \frt_f)$. Note that $|W(\frg, \frt_f)|/|W(\frk, \frt_f)|=56$.
We enumerate them as
$$
(\Delta^+)^{(j)}(\frg, \frt_f)=\Delta^+(\frk, \frt_f) \cup  (\Delta^+)^{(j)}(\frp, \frt_f), \quad 0\leq j\leq 55.
$$
Let us denote the half sum of roots of $(\Delta^+)^{(j)}(\frg, \frt_f)$ by $\rho^{(j)}$, and put $\rho_n^{(j)}:=\rho^{(j)} -\rho_c$. Then $\rho_n^{(j)}$ is the half sum of roots in $(\Delta^+)^{(j)}(\frp, \frt_f)$. Let $w^{(j)}$ be the unique element in $W(\frg, \frh_f)$ such that $w^{(j)} \rho^{(0)}=\rho^{(j)}$, and collect them as the set $W(\frg, \frt_f)^1$. Then $w^{(0)}=e$, and by a result of Kostant \cite{Ko}, the multiplication map induces a bijection from $W(\frg, \frt_f)^1 \times W(\frk, \frt_f)$ onto $W(\frg, \frt_f)$.
For any $0\leq j \leq 55$, $w^{(j)}\alpha_1, \dots, w^{(j)}\alpha_7$ are the simple roots of $(\Delta^+)^{(j)}(\frg, \frt_f)$, and $w^{(j)}\zeta_1, \dots, w^{(j)}\zeta_7$ are the corresponding fundamental weights.

Let us denote the dominant Weyl chamber for $(\Delta^+)^{(j)}(\frg, \frt_f)$ (resp., $\Delta^+(\frk, \frt_f)$) by $\caC^{(j)}$ (resp., $\caC$). Then
\begin{equation}\label{cone-deco}
\caC=\bigcup_{j=0}^{55}\caC^{(j)}.
\end{equation}
Note that $w^{(j)}\caC^{(0)}=\caC^{(j)}$ for $0\leq j\leq 55$. The convex hull formed by the $W(\frk, \frt_f)$ orbits of the points $\rho_n^{(j)}$, $0\leq j\leq 55$, is called the \emph{u-small convex hull}, and a $K$-type is said to be \emph{u-small} if its highest weight lies in this polyhedron. See Salamanca-Riba and Vogan \cite{SV}. Otherwise, we will call this $K$-type \emph{u-large}.

Consider an arbitrary $K$-type $E_{\mu}$. Choose $0\leq j\leq 55$ such that $\mu + 2\rho_c$ is dominant for $(\Delta^+)^{(j)}(\frg, \frt_f)$.
Put
\begin{equation}\label{lambda-a-mu}
\lambda_a(\mu):=P(\mu+2 \rho_c -\rho^{(j)}).
\end{equation}
Here for any vector $\eta\in i\frt_{f, 0}^*$,  $P(\eta)$ stands for the projection of $\eta$ to the cone $\caC^{(j)}$. Namely, $P(\eta)$ is the unique point in $\caC^{(j)}$ which is closest to $\eta$. It turns out that the vector $\lambda_a(\mu)$ is independent of the choice of an allowable $j$, and the \emph{lambda norm} of $\mu$ is defined as
\begin{equation}\label{lambda-norm}
\|\mu\|_{\rm lambda}:=\|\lambda_a(\mu)\|.
\end{equation}
The above geometric way of describing the lambda norm in  \cite{V81} is due to Carmona \cite{Ca}.
Let $\pi$ be an irreducible $(\frg, K)$ module. Then the $K$-type $\mu$ is called a \emph{lowest $K$-type} (LKT for short) of $\pi$ if $\mu$ shows up in $\pi$ and its lambda norm attains the minimum among all the $K$-types of $\pi$.

Given a LKT $\mu$ of $\pi$, the infinitesimal character $\Lambda$ of $\pi$ can be written as
\begin{equation}\label{inf-char}
\Lambda=(\lambda_a(\mu), \nu)\in \frh^*=\frt^*+\fra^*.
\end{equation}
As in \cite{V81}, $G(\lambda_a(\mu))$ is the isotropy group at $\lambda_a(\mu)$ for the $G$ action; $\frh$ is the complexified Lie algebra of a maximally split $\theta$-stable Cartan subgroup $H=TA$ of $G(\lambda_a(\mu))$. Note that $\nu$ in \eqref{inf-char} has the same norm as the $\nu$-part in the \texttt{atlas} parameter $p=(x, \lambda, \nu)$ of $\pi$. Abusing the notation a bit, we will not distinguish them.

Let $S_G$ be the irreducible module  of the Clifford algebra $C(\frp)$.
As a special case of Lemma 9.3.2 of \cite{Wa},
we have the following decomposition
\begin{equation}\label{spin-module}
S_G=\bigoplus_{j=0}^{55} E_{\rho_n^{(j)}}
\end{equation}
as $\frk$-modules. Now the spin norm introduced in \cite{D13} is
\begin{equation}\label{lambda-norm}
\|\mu\|_{\rm spin}:=\min_{0\leq j\leq 55} \|\{\mu-\rho_n^{(j)}\}  + \rho_c \|.
\end{equation}
Here $\{\mu-\rho_n^{(j)}\}$ is the unique dominant weight in the $W(\frk, \frt_f)$ orbit of $\mu-\rho_n^{(j)}$. We emphasize that $\{\mu-\rho_n^{(j)}\}$, $0\leq j\leq 55$, are precisely all the PRV components \cite{PRV} of the tensor product $E_{\mu}\otimes S_G$ as $\frk$-modules. A $K$-type $\delta$ of $\pi$ is called a \emph{spin lowest $K$-type} (spin LKT for short) of $\pi$ if its spin norm attains the minimum among all the $K$-types of $\pi$.

When $\pi$ is unitary, the case that we care the most, an equivalent way to formulate the Dirac inequality \eqref{Dirac-inequality} is
\begin{equation}\label{Dirac-inequality-eq}
\|\Lambda\| \leq \|\delta\|_{\rm spin},
\end{equation}
where $\delta$ is any $K$-type occurring in $\pi$. In view of the explanation after \eqref{Dirac-unitary}, $H_D(\pi)$ is non-zero if and only if \eqref{Dirac-inequality-eq} becomes equality on some $K$-types $\delta$ of $\pi$ (which then must be spin LKTs of $\pi$).

Now let us state the Helgason-Johnson bound and its improvements.

\begin{prop}\label{prop-HJ}
In the above setting, further assume that $\pi$ is unitary, then we have that
\begin{itemize}
\item[(a)] $\|\nu\|\leq \sqrt{\frac{399}{2}}=\|\rho\|$;
\item[(b)] $\|\nu\|\leq \sqrt{\frac{371}{2}}$;
\item[(c)] $\|\nu\|<\sqrt{94}$ if $\Lambda$ is integral and $\pi$ is not trivial or minimal.
\end{itemize}
\end{prop}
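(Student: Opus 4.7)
The plan is as follows. Part (a) is the Helgason--Johnson bound \cite{HJ} specialized to $G=E_{7(-25)}$, which reduces to the direct computation
$$
\|\rho\|^2 \;=\; 0^2+1^2+2^2+3^2+4^2+5^2+2\cdot(17/2)^2 \;=\; 55 + 289/2 \;=\; 399/2.
$$

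For part (b), I would apply Parthasarathy's Dirac inequality in the spin-norm form \eqref{Dirac-inequality-eq}. For a LKT $\mu$ of a unitary $\pi$, write the infinitesimal character as $\Lambda=(\lambda_a(\mu),\nu)\in\frt^*\oplus\fra^*$ in accordance with \eqref{inf-char}; since this decomposition is orthogonal we have $\|\Lambda\|^2=\|\lambda_a(\mu)\|^2+\|\nu\|^2$, and combining with $\|\Lambda\|\le\|\mu\|_{\rm spin}$ yields
$$
\|\nu\|^2 \;\le\; \|\mu\|_{\rm spin}^2-\|\lambda_a(\mu)\|^2.
$$
It then suffices to verify that the right-hand side never exceeds $371/2$ for any $K$-type $\mu$ arising as a LKT of some unitary irreducible. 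Using the chamber decomposition $\caC=\bigcup_{j=0}^{55}\caC^{(j)}$ of \eqref{cone-deco}, this becomes a finite geometric problem: for each candidate $\mu$, identify the index $j$ making $\mu+2\rho_c$ dominant for $(\Delta^+)^{(j)}(\frg,\frt_f)$, compute $\lambda_a(\mu)$ by projection onto $\caC^{(j)}$, compare with the index $j_\ast$ realizing the spin-norm minimum in \eqref{lambda-norm}, and check that the resulting deficit between the two norms is at least $399/2-371/2=14$.

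For part (c), I would invoke the u-small convex hull of Salamanca-Riba--Vogan \cite{SV}. The assumption that $\Lambda$ is integral, together with $\pi$ being neither trivial nor minimal, is designed to rule out the two exceptional configurations that saturate the weaker bounds. The analysis splits into two cases according to whether the LKT $\mu$ is u-small or u-large. If $\mu$ is u-small, one expects via Theorem \ref{thm-SR} (and its weakly-fair extensions) that $\pi$ arises by cohomological induction from a proper $\theta$-stable Levi, so that the bound $\|\nu\|<\sqrt{94}$ can be propagated from a smaller-rank Levi factor whose Helgason--Johnson bound is strictly smaller. If $\mu$ is u-large, the fact that $\mu$ lies outside the u-small polytope (and the integrality of $\Lambda$) provides a strict lower bound on $\|\lambda_a(\mu)\|$, which when fed back into the Dirac-inequality estimate of (b) sharpens the $\nu$-bound past $\sqrt{94}$.

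The main obstacle I foresee is pinning down the precise constant $\sqrt{94}$ and excluding exactly the trivial and minimal representations: showing that no other integral unitary representation realizes $\|\nu\|\ge\sqrt{94}$ will most likely require an \texttt{atlas}-assisted enumeration of small LKTs together with a careful case-by-case treatment of the finitely many borderline configurations near the bound, coordinated with the geometry of the 56 chambers $\caC^{(j)}$.
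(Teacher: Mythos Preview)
Your treatment of (a) and the Dirac-inequality framework
$\|\nu\|^2\le\|\mu\|_{\rm spin}^2-\|\lambda_a(\mu)\|^2$ are fine, and you are right that (b) is a citation (to \cite{D20}) rather than something reproved here. The problem is your case split in (c): you have the roles of u-small and u-large essentially reversed, and your proposed mechanism for the u-small case does not work.

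For u-large $\mu$ the relevant fact is not a lower bound on $\|\lambda_a(\mu)\|$ alone, but a \emph{uniform upper bound on the difference} $\|\mu\|_{\rm spin}^2-\|\mu\|_{\rm lambda}^2$. The paper quotes from \cite{D20} that $\max_j A_j=79$, which gives $\|\mu\|_{\rm spin}^2-\|\mu\|_{\rm lambda}^2\le 79$ for every u-large $K$-type; since $79<94$, the u-large case is disposed of in one line. Your formulation (``a strict lower bound on $\|\lambda_a(\mu)\|$'') does not capture this, because both $\|\mu\|_{\rm spin}$ and $\|\lambda_a(\mu)\|$ grow without bound as $\mu$ moves away from the origin; it is only their squared difference that is controlled.

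For u-small $\mu$ your proposal is a genuine gap. You invoke Theorem~\ref{thm-SR} to argue that $\pi$ is cohomologically induced from a proper Levi, but Salamanca-Riba's theorem is a hypothesis on the \emph{infinitesimal character} (strongly regular) and says nothing about u-smallness of a LKT. Indeed the trivial and minimal representations themselves have u-small LKTs and are fully supported, and Theorem~\ref{thm-EVII} records that every FS-scattered Dirac series has u-small spin LKTs; so ``u-small LKT $\Rightarrow$ proper Levi'' is simply false. The paper instead exploits the finiteness of the u-small polytope directly: there are $21294$ u-small $K$-types, one computes $\|\mu\|_{\rm spin}^2-\|\mu\|_{\rm lambda}^2$ for each, and only $71$ of them (the set \texttt{Certs}) have this difference $\ge 94$. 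For those $71$ one finds $14\le\|\lambda_a(\mu)\|^2\le 49$, which together with (b) pins $\|\Lambda\|^2$ to a bounded window containing $4676$ integral $\Lambda$'s; an \texttt{atlas} search over representations with those infinitesimal characters and LKTs in \texttt{Certs} then leaves exactly the trivial and the two minimal representations. Your final paragraph does anticipate an \texttt{atlas} enumeration, but without the correct reduction to a finite list of candidate LKTs there is no finite search to run.
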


Item (a) above is due to Helgason-Johnson \cite{HJ} in 1969. Its slight extension (b) is given in \cite{D20} recently. In item (c), $\Lambda$ being integral means that it is an integer combination of $\zeta_1, \dots, \zeta_7$.

\begin{example}\label{exam-EVII-trivial-minimal}
The trivial representation of $G$ has \texttt{atlas} parameter:
\begin{verbatim}
final parameter(x=3016,lambda=[1,1,1,1,1,1,1]/1,nu=[4,0,0,0,0,4,1]/1)
\end{verbatim}
One computes that $\|\nu\|=\frac{371}{2}$, which attains the bound in item (b) of Proposition \ref{prop-HJ}.
Using \texttt{atlas}, one calculates that there are two irreducible representations with infinitesimal character $[1,1,1,0,1,1,1]$ and GK dimension $(\rho, \beta^\vee)=17$:
\begin{verbatim}
final parameter(x=2989,lambda=[3,2,2,-1,1,1,2]/1,nu=[8,5,5,-5,0,0,5]/2)
final parameter(x=2988,lambda=[3,2,2,-1,1,1,2]/1,nu=[8,5,5,-5,0,0,5]/2)
\end{verbatim}
Both of them are unitary with $\|\nu\|=\sqrt{97}$. Thus they \emph{exhaust} the  minimal representations of $G$. Both of them are Dirac series, and show up in Table \ref{table-EVII-1110111}.   \hfill\qed
\end{example}

To deduce item (c) of Proposition \ref{prop-HJ}, it suffices to prove the following
\begin{lemma}\label{lemma-HJ-EVII}
Assume that $\pi$ is a unitary $(\frg, K)$ module with integral infinitesimal character $\Lambda$ which is given by \eqref{inf-char}. If $\|\nu\|\geq \sqrt{94}$, then $\pi$ must be a minimal representation or the trivial representation.
\end{lemma}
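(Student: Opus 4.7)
The plan is to convert the statement into a finite check that can be carried out in \texttt{atlas}. Write $\Lambda=(\lambda_a(\mu),\nu)$ as in \eqref{inf-char}, where $\mu$ is a lowest $K$-type of $\pi$. A key input from the theory of the u-small convex hull introduced by Salamanca-Riba and Vogan is that for a unitary module the lambda-norm $\|\mu\|_{\rm lambda}=\|\lambda_a(\mu)\|$ is controlled by the explicit bounded polyhedron spanned by the $W(\frk,\frt_f)$-orbits of the $\rho_n^{(j)}$, $0\le j\le 55$. This provides an explicit constant $M$ with $\|\lambda_a(\mu)\|\le M$ for every LKT $\mu$ of any unitary $(\frg,K)$-module. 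Combined with the hypothesis $\|\nu\|^2\ge 94$ and Proposition \ref{prop-HJ}(b), we obtain the sandwich
$$
94\le \|\Lambda\|^2=\|\lambda_a(\mu)\|^2+\|\nu\|^2\le M^2+\tfrac{371}{2}.
$$

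First I would enumerate all integral dominant weights $\Lambda=\sum_{i=1}^{7} a_i\zeta_i$ with $a_i\in\bbZ_{\ge 0}$ whose squared length (computed via the $E_7$ Cartan matrix) lies in the interval above; this is a finite and explicit list. For each such candidate I would call \texttt{all\_parameters\_gamma(G,}$\Lambda$\texttt{)} in \texttt{atlas}, filter by \texttt{is\_unitary}, and retain only those parameters with $\|\nu\|^2\ge 94$. The lemma then reduces to verifying that the only survivors are the trivial representation at $\Lambda=\rho$, where $\|\nu\|^2=\tfrac{371}{2}$, and the two minimal representations at $\Lambda=[1,1,1,0,1,1,1]$, where $\|\nu\|^2=97$, already exhibited in Example \ref{exam-EVII-trivial-minimal}.

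The main obstacle will be keeping the search computationally feasible, since brute enumeration of all integral dominant $\Lambda$ in the allowed range produces many candidates. To tame the problem I would appeal to Theorem \ref{thm-Vogan}: any $\pi$ whose KGB element $x$ is not fully supported is cohomologically induced in the weakly good range from a proper real Levi $L(x)$, and its $\nu$-parameter coincides with that of the inducing $(\frl,L\cap K)$-module. Such cases can then be handled on smaller real groups---classical, or exceptional of lower rank---where the analogous Helgason-Johnson type bound on $\|\nu\|$ is already much tighter than $\sqrt{94}$ and rules them out. What is left are the fully supported KGB elements together with the (now modest) list of candidate infinitesimal characters, which can be dispatched by a direct \texttt{atlas} computation confirming that only the trivial and the two minimal representations arise.
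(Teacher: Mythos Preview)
Your reduction strategy has a genuine gap at its very first step. You assert that the u-small convex hull of Salamanca--Riba and Vogan furnishes a universal constant $M$ with $\|\lambda_a(\mu)\|\le M$ for every lowest $K$-type $\mu$ of any unitary $(\frg,K)$-module. That is false: the u-small convex hull is a region in which a $K$-type \emph{may or may not} lie, not a bound forced on LKTs of unitary representations. Discrete series, and more generally $A_{\frq}(\lambda)$ modules, have LKTs of arbitrarily large lambda norm. So there is no such $M$, and your sandwich $94\le\|\Lambda\|^2\le M^2+\tfrac{371}{2}$ collapses on the right.

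The paper obtains finiteness by a completely different mechanism: the Dirac operator inequality \eqref{Dirac-inequality-eq} applied to the LKT gives $\|\nu\|^2\le\|\mu\|_{\rm spin}^2-\|\mu\|_{\rm lambda}^2$. A result from \cite{D20} shows that this difference is at most $79$ whenever $\mu$ is u-large, so the hypothesis $\|\nu\|^2\ge 94$ forces $\mu$ to be u-small. There are only finitely many u-small $K$-types (here $21294$), and among them only $71$ satisfy $\|\mu\|_{\rm spin}^2-\|\mu\|_{\rm lambda}^2\ge 94$; for these one computes $14\le\|\lambda_a(\mu)\|^2\le 49$, which together with Proposition~\ref{prop-HJ}(b) yields $108\le\|\Lambda\|^2\le 49+\tfrac{371}{2}$ and a list of $4676$ candidate $\Lambda$. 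One then searches only for representations whose LKT lies in that short list of $71$ $K$-types, making the \texttt{atlas} check tractable. The spin-norm ingredient is essential and is entirely absent from your outline; your proposed fallback of pushing non-fully-supported parameters down to Levi subgroups does not help either, since the Levis of $E_{7(-25)}$ include groups (e.g.\ of type $E_6$ or $D_6$) whose own Helgason--Johnson bounds are not below $\sqrt{94}$.
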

\begin{proof}
Let $\mu$ be a LKT of $\pi$. Since $\pi$ is unitary, the Dirac inequality \eqref{Dirac-inequality-eq} guarantees that
$$
\|\Lambda\|^2=\|\lambda_a(\mu)\|^2+\|\nu\|^2\leq \|\mu\|^2_{\rm spin}.
$$
Therefore,
\begin{equation}\label{nu-bound-HS}
\|\nu\|^2\leq \|\mu\|_{\rm spin}^2 - \|\mu\|_{\rm lambda}^2.
\end{equation}
As computed in Section 6.2 of \cite{D20}, we have that $\max\{A_j\mid 0\leq j \leq 55\}=79$. We refer the reader to Section 3 of \cite{D20} for the precise definition of $A_j$. It follows that for any u-large $K$-type $\mu$, one has that
$$\|\mu\|_{\rm spin}^2 - \|\mu\|_{\rm lambda}^2\leq 79.
$$
Since $\|\nu\|\geq\sqrt{94}$, we conclude from \eqref{nu-bound-HS} that $\mu$ can \emph{not} be u-large.

There are $21294$ u-small $K$-types in total. Among them, only $71$ have the property that
$$
94\leq \|\mu\|_{\rm spin}^2 - \|\mu\|_{\rm lambda}^2.
$$
Let us collect these $71$ u-small $K$-types as \texttt{Certs}. Its elements are listed as follow:
\begin{align*}
&[0,0,0,0,0,0,0],  \\
&[0,2,0,0,0,0,0], \\
&[0,1,0,0,0,0,3m] \,(-3\leq m\leq 3), \\
&[0,0,0,1,0,0,3m] \,(-1\leq m\leq 1), \\
&[1,0,0,0,0,1,3m] \,(-2\leq m\leq 2),\\
&[0,0,0,0,0,0, 3m] \,(1\leq m\leq 4),  & [0,0,0,0,0,0, -3m] \,(1\leq m\leq 4),\\
&[0,0,0,0,0,3,3m] \,(2\leq m\leq 3),  &[3,0,0,0,0,0,-3m] \,(2\leq m\leq 3),\\
&[0,0,0,0,0,1,3m+1] \,(-4\leq m\leq 2), &[1,0,0,0,0,0,-3m-1] \,(-4\leq m\leq 2),\\
&[0,0,0,0,0,2,3m-1] \,(-2\leq m\leq 3), &[2,0,0,0,0,0,-3m+1] \,(-2\leq m\leq 3),\\
&[0,0,0,0,1,0,3m-1] \,(-2\leq m\leq 2),  &[0,0,1,0,0,0,-3m+1] \,(-2\leq m\leq 2),\\
&[0,1,0,0,0,1,3m+1] \,(-1\leq m\leq 1),  &[1,1,0,0,0,0,-3m-1] \,(-1\leq m\leq 1).
\end{align*}
Here we list a $K$-type and its contragredient in the same row.
Moreover, we compute that
$14\leq \|\lambda_a(\mu)\|^2\leq 49$ for any $\mu\in \texttt{Certs}$. Therefore,
\begin{equation}\label{can-Lambda}
14+ 94 \leq \|\Lambda\|^2=\|\lambda_a(\mu)\|^2+\|\nu\|^2\leq 49 +\frac{371}{2}.
\end{equation}
The right hand side above uses the proven Proposition \ref{prop-HJ}(b). There are $4676$ integral $\Lambda$s meeting the requirement \eqref{can-Lambda}. We collect them as $\Omega$.

Now a direct search using \texttt{atlas} says that there are $6474$ irreducible  representations $\pi$ such that $\Lambda\in \Omega$ and that $\pi$ has a LKT which is a member of \texttt{Certs}. Furthermore, only three of them turn out to be unitary. These three representations are explicitly presented in Example \ref{exam-EVII-trivial-minimal}: they are either trivial or minimal.
This finishes the proof.
\end{proof}

\section{Dirac series of $E_{7(-25)}$}

\subsection{FS-scattered representations of $E_{7(-25)}$}\label{sec-FS-EVII}

This subsection aims to seive out all the FS-scattered Dirac series representations for $E_{7(-25)}$ using the algorithm in \cite{D17}.

To achieve the goal, it suffices to consider all the infinitesimal characters $\Lambda=[a, b, c, d, e, f, g]$ such that
\begin{itemize}
\item[$\bullet$] $a$, $b$, $c$, $d$, $e$, $f$, $g$ are non-negative integers such that \eqref{HP-EVII-1}--\eqref{HP-EVII-3} hold;
\item[$\bullet$] $\min\{a, b, c, d, e, f, g\}=0$;
\item[$\bullet$] there exists a fully supported KGB element $x$ such that $\|\frac{\Lambda-\theta_x\Lambda}{2}\|< \sqrt{94}$.
\end{itemize}
Let us collect them as $\Phi$. Note that the first item is guaranteed by Lemma \ref{lemma-EVII-HP}. The second item uses Theorem \ref{thm-SR}.  The third item needs some explanation. Let $\pi$ be an irreducible unitary $(\frg, K)$ module with infinitesimal character $\Lambda$ which is dominant integral for $\Delta^{+}(\frg, \frt_f)$. Let $p=(x, \lambda, \nu)$ be the $\texttt{atlas}$ parameter for $\pi$. Then
\begin{equation}\label{nu}
\nu=\frac{\Lambda-\theta_x(\Lambda)}{2},
\end{equation}
where $\theta_x$ is \texttt{involution(x)} in \texttt{atlas}. Assume further that $\pi$ is \emph{neither trivial nor minimal}. Then by Proposition  \ref{prop-HJ}(c), we must have
$$
\|\nu\|< \sqrt{94}.
$$
The third item greatly reduces the cardinality of $\Phi$, which turns out to be $178192$.

Let us collect all the members of $\Phi$ whose largest coordinate equals to $i$ as $\Phi_i$. Then $\Phi$ is partitioned into $\Phi_1, \dots, \Phi_{13}$. Note that $\Phi_1$ has cardinality $23$, and that
\begin{center}
\begin{tabular}{c|c|c|c|c|c}
$\#\Phi_2$ & $\#\Phi_3$ & $\#\Phi_4$ & $\#\Phi_5$ & $\#\Phi_6$ & $\#\Phi_7$  \\
\hline
$921$  & $7817$ & $27246$ & $42088$ & $39685$ & $28107$\\
\hline
$\#\Phi_8$  & $\#\Phi_9$ & $\#\Phi_{10}$& $\#\Phi_{11}$ & $\#\Phi_{12}$ & $\#\Phi_{13}$  \\
\hline
$17649$ & $9042$ & $4022$ & $1359$ & $220$ & $13$
\end{tabular}
\end{center}
The elements of $\Phi_1$ are listed as follows:
\begin{align*}
&[0, 0, 1, 1, 1, 1, 1], [0, 1, 1, 0, 1, 1, 1], [0, 1, 1, 1, 0, 1, 1], [0, 1, 1, 1, 1, 0, 1], [0, 1, 1, 1, 1, 1, 0],\\
&[0, 1, 1, 1, 1, 1, 1], [1, 0, 0, 1, 1, 1, 1], [1, 0, 1, 1, 0, 1, 0], [1, 0, 1, 1, 0, 1, 1], [1, 0, 1, 1, 1, 0, 1]\\
&[1, 0, 1, 1, 1, 1, 0], [1, 0, 1, 1, 1, 1, 1], [1, 1, 0, 1, 0, 1, 1], [1, 1, 0, 1, 1, 0, 1], [1, 1, 0, 1, 1, 1, 0],\\
&[1, 1, 0, 1, 1, 1, 1], [1, 1, 1, 0, 1, 0, 1], [1, 1, 1, 0, 1, 1, 0], [1, 1, 1, 0, 1, 1, 1], [1, 1, 1, 1, 0, 1, 0],\\
&[1, 1, 1, 1, 0, 1, 1], [1, 1, 1, 1, 1, 0, 1], [1, 1, 1, 1, 1, 1, 0].
\end{align*}
A careful study of the irreducible unitary representations under the above $23$ infinitesimal characters leads us to Section \ref{sec-appendix}.

For the elements $\Lambda$ in $\Phi_2$, $\Phi_3$ up to $\Phi_{13}$, we use Proposition \ref{prop-HJ}(c) and \texttt{atlas} to verify that there is \emph{no} fully supported irreducible unitary representations with infinitesimal character $\Lambda$. Let us denote by $\Pi^{\rm u}_{\rm FS}(\Lambda)$ the set of all the fully supported irreducible unitary representations (up to equivalence) with infinitesimal character $\Lambda$.

\begin{example}\label{example-EVII-Phi8}
Let us consider the infinitesimal character $\Lambda=[1, 0, 1, 1, 1, 0, 8]$ in $\Phi_8$.
\begin{verbatim}
G:E7_h
set all=all_parameters_gamma(G, [1, 0, 1, 1, 1, 0, 8])
#all
Value: 525
set allFS=## for p in all do if #support(p)=7 then [p] else [] fi od
#allFS
Value: 246
\end{verbatim}
Therefore, there are $525$ irreducible representations under $\Lambda$, amon which $246$ are fully supported.

Now let us compare the original Helgason-Johnson bound and the sharpened one.
\begin{verbatim}
set HJ=94
set TgFWts=mat: [[0, 1, -1, 0, 0, 0, 0], [0, 1, 1, 0, 0, 0, 0],
[0, 1, 1, 2, 0, 0, 0], [0, 1, 1, 2, 2, 0, 0], [0, 1, 1, 2, 2, 2, 0],
[0, 1, 1, 2, 2, 2, 2], [-2, -2, -3, -4, -3, -2, -1], [2, 2, 3, 4, 3, 2, 1]]
set oldHJ=##for p in allFS do if (nu(p)*TgFWts)*(nu(p)*TgFWts)
<=4*399/2 then [p] else [] fi od
#oldHJ
Value: 218
set newHJ=##for p in allFS do if (nu(p)*TgFWts)*(nu(p)*TgFWts)
<4*HJ then [p] else [] fi od
#newHJ
Value: 29
\end{verbatim}
Therefore, there are $218$ fully  supported irreducible representations satisfying $\|\nu\|\leq \sqrt{\frac{399}{2}}$, while only $29$ of them satisfy $\|\nu\|< \sqrt{94}$.

Finally, let us test the unitarity of the representations in \texttt{newHJ}.
\begin{verbatim}
void: for p in newHJ do if is_unitary(p) then prints(p) fi od
\end{verbatim}
There is no output from \texttt{atlas}, meaning that no representation in \texttt{newHJ} is unitary. To sum up, we have that $\Pi^{\rm u}_{\rm FS}([1, 0, 1, 1, 1, 0, 8])=\emptyset$.
\hfill\qed
\end{example}
\begin{rmk}\label{rmk-example-EVI-Phi8}
Since \texttt{atlas} takes some time to testing unitarity, adopting the sharpened Helason-Johnson bound (thus reducing the number of representations for which we need to test their unitarity) saves us a lot of time.
\end{rmk}

\begin{example}\label{exam-EVII-1011010}
Consider the infinitesimal character $\Lambda=[1, 0, 1, 1, 0, 1, 0]$ for \texttt{E7\_h}.
It turns out that $\Pi^{\rm u}_{\rm FS}(\Lambda)$ consists of five representations, one of them is
\begin{verbatim}
set p=parameter(KGB(G,2969),[1,0,1,1,0,3,1],[0,0,0,0,0,4,0])
\end{verbatim}
Theorem \ref{thm-HP} guarantees that to study its Dirac cohomology, it suffices to look at its $K$-types up to the \texttt{atlas} height $248$. The \texttt{atlas} height of a $K$-type $\mu$ is given by
\begin{equation}\label{atlas-height}
\sum_{\alpha\in (\Delta^+)^{(j)}(\frg, \frt_f)}\langle \lambda_a(\mu), \alpha^\vee\rangle,
\end{equation}
where the $0\leq j\leq 55$ is chosen so that $\mu+2\rho_c$ is dominant with respect to $(\Delta^+)^{(j)}(\frg, \frt_f)$, and $\langle\cdot,\cdot\rangle$ is the natural pairing between roots and co-roots.
 Then the \texttt{atlas} command
\begin{verbatim}
print_branch_irr_long(p,KGB(G,55), 248)
\end{verbatim}
gives us $157$ such $K$-types in total, and the minimum spin norm of them is
$\sqrt{\frac{159}{2}}$, which is strictly larger than $\|\Lambda\|=\sqrt{78}$. Therefore, the unitary representation \texttt{p} has zero Dirac cohomology in view of the discussion following \eqref{Dirac-inequality-eq}.

The other four representations in $\Pi^{\rm u}_{\rm FS}(\Lambda)$ can be handled similarly. It turns out that they all have non-zero Dirac cohomology. We record them in Table \ref{table-EVII-1011010}, where the bolded \textbf{2949} under the column $\#x'$ means that there is another representation having KGB element $\#2949$, and having the same $\lambda$ and $\nu$ with that of $\#2950$.
\hfill\qed
\end{example}

All the other bolded entries under the columns $\#x'$ in the tables of Section \ref{sec-appendix} are interpreted similarly. In particular, counting the number of KGB elements there (being bolded or not) gives that there are $73$ \emph{non-trivial} FS-scattered representations in Section \ref{sec-appendix}.

\subsection{Counting the strings in $\widehat{E_{7(-25)}}^d$}\label{sec-EVII-esc}

Firstly, let us verify that Conjecture 2.6 of \cite{D21} and the binary condition holds for $E_{7(-25)}$.

\begin{example}
Consider the case that $\texttt{support(x)=[1, 2, 3, 4, 5, 6]}$. There are $236$ such KGB elements in total. We compute that there are $39002$ infinitesimal characters $\Lambda=[a, b, c, d, e, f, g]$ in total such that
\begin{itemize}
\item[$\bullet$] $b$, $c$, $d$, $e$, $f$, $g$ are non-negative integers, $a=0$ and that \eqref{HP-EVII-1}--\eqref{HP-EVII-3} hold;
\item[$\bullet$] there exists a  KGB element $x$ with support $[1, 2, 3, 4, 5, 6]$ such that $\|\frac{\Lambda-\theta_x\Lambda}{2}\|\leq \sqrt{94} $.
\end{itemize}
We exhaust all the irreducible unitary representations under these infinitesimal characters  with the above $236$ KGB elements. It turns out that such representations occur only when $b, c, d, e, f, g=0$ or $1$.  Then we check that each inducing module $\pi_{L(x)}$ (see Theorem \ref{thm-Vogan}) is indeed unitary. \hfill\qed
\end{example}

All the other non fully supported KGB elements are handled similarly. Thus Conjecture 2.6 of \cite{D21} and the binary condition hold for $E_{7(-25)}$.

Now we use the formula in Section 5 of \cite{D21} to figure out the number of strings in $\widehat{E_{7(-25)}}^d$. Recall that for any \emph{proper} subset $S$ of $\{1,2,3,4,5,6,7\}$, we use $N(S)$ to denote the number of Dirac series representations with infinitesimal character $\Lambda=\sum_{i=1}^{7} n_i \zeta_i$ such that $n_i$ is either $0$ or $1$ for each $i\in S$, and that $n_i=1$ for each $i\notin S$. For example, $N(\emptyset)$ counts the Dirac series representations with infinitesimal character $[1,1,1,1,1,1,1]$ and with KGB element $x$ such that $\texttt{support}(x)$ is empty. These representations are all tempered. It turns out that $N(\emptyset)=56$.
For each $0\leq i\leq 6$, we set
\begin{equation}\label{N-i}
N_i=\sum_{\#S=i}N(S).
\end{equation}
For instance, $N_0=N(\emptyset)=56$.

We compute that
\begin{align*}
&N([0,1,2,4,5,6])=0, \quad N([0,1,2,3,5,6])=4, \quad N([0,1,3,4,5,6])=2, \\
&N([0,1,2,3,4,6])=6, \quad N([0,2,3,4,5,6])=34, \quad N([1,2,3,4,5,6])=50,\\
&N([0,1,2,3,4,5])=62.
\end{align*}
In particular, it follows that $N_6=158$. We also compute that
$$
N_1=84, \quad N_2=102, \quad N_3=133, \quad N_4=164, \quad N_5=181.
$$
Therefore, the total number of strings for $E_{7(-25)}$ is equal to
$$
\sum_{i=0}^{6} N_i=878.
$$

Some auxiliary files have been built up to facilitate the classification of the Dirac series of $E_{7(-25)}$. They are available via the following link:
\begin{verbatim}
https://www.researchgate.net/publication/353352799_EVII-Files
\end{verbatim}

\section{Examples}

Keeping the notation for $E_{7(-25)}$ as in Section \ref{sec-structure}, we have a maximal $\theta$-stable parabolic subalgebra $\frq:=\frk+\frp^+$ of $\frg$. Let $E_{\mu}$ be the $K$-type with highest weight $\mu\in\frt_f^*$. Form the generalized Verma module
$$
N(\mu):=U(\frg)\otimes_{U(\frq)} E_{\mu}.
$$
Let $L(\mu)$ denote the irreducible quotient of $N(\mu)$. Those unitarizable $L(\mu)$, called (irreducible) \emph{unitary highest weight modules} in the literature, were classified in \cite{EHW, Ja}, and were known to be Dirac series representations \cite{HPP}. Indeed, by Proposition 3.7 of \cite{HPP}, the lowest $K$-type $E_{\mu}$ of $L(\mu)$ must contribute to $H_D(L(\mu))$. Thus $E_{\mu}$ must be one of the spin LKTs of $L(\mu)$. There is a similar story for irreducible unitary lowest weight modules. In the tables of Section \ref{sec-appendix}, we have marked out all the LKTs if they are simultaneously spin LKTs. Therefore, whenever the term ``LKT=" does not show up among the spin LKTs (see the $\#2960$ representation in Table \ref{table-EVII-0110111} for a specific instance), it means that the scattered representation is neither a highest weight module  nor a lowest weight module. Thus like the \texttt{E6\_h} case \cite{DDH}, the Dirac series of \texttt{E7\_h} go beyond the highest/lowest weight modules.

\begin{example}\label{exam-series-hwt}
By Theorem 13.4 (c) of \cite{EHW}, one finds that $L(z \zeta)$ is an irreducible unitary highest weight module of $E_{7(-25)}$ if and only if $z$ is an integer, and that $z=0, -4, -8$ or  $z\in (-\infty, -9]$. We locate these representations in Table \ref{table-hwt-zzeta}, where $g$ runs over the non-negative integers.

If the representation is non-trivial and fully supported,  the row ``Table lebel" gives the label of the table where we can find the representation. For instance, the $\#2365$ representations can be found in Table \ref{table-EVII-1110101}. The value $z=0$ produces the trivial representation (see Example \ref{exam-EVII-trivial-minimal} for its \texttt{atlas} parameter), while the values $z=-4$ and $-8$ are the two Wallach modules whose Dirac cohomology will be studied in Example \ref{exam-EVII-Wallach}.

Whenever $z\leq -12$, the representation $L(z\zeta)$ is not fully supported. Thus it will fit into a string, and does \emph{not} occur in the tables of Section \ref{sec-appendix}. The representation $L(z\zeta)$ is tempered whenever $z\leq -17$. \hfill\qed
\end{example}

\begin{table}[H]
\caption{Highest weight modules $L(z \zeta)$}
\begin{tabular}{r|c|c|c|c|c|c|}
$z$ &     $-11$ & $-10$ & $-9$ & $-8$ & $-4$ & $0$\\
\hline
$\#x$ &   $\#1224$ & $\#1604$ & $\#1975$ & $\#2365$& $\#2988$ & $\#3016$\\
\hline
Table label & \ref{table-EVII-1101011}   & \ref{table-EVII-1110101} & \ref{table-EVII-1011010} &  \ref{table-EVII-1110101} & \ref{table-EVII-1110111} &   \\
\hline
$z$ &  $-17-g$  & $-16$ & $-15$ & $-14$ & $-13$ & $-12$\\
\hline
$\#x$ &   $\#52$ & $\#66$ & $\#180$ & $\#301$& $\#443$ & $\#807$\\

\end{tabular}
\label{table-hwt-zzeta}
\end{table}

\begin{example}\label{exam-EVII-Wallach}
By Theorem 5.2 of \cite{EHW}, $E_{7(-25)}$ has two Wallach modules. Namely, $L(-4\zeta)$ and $L(-8\zeta)$.
See Example \ref{exam-series-hwt}.

The module $L(-4\zeta)$ has infinitesimal character $\Lambda=[1,1,1,0,1,1,1]$.
This is a minimal representation, and its $K$-types in terms of the basis $\{\varpi_1, \dots, \varpi_6, \frac{1}{3}\zeta\}$ are as follows:
$$
[0, 0, 0, 0, 0, 0, -12]+ n [0, 0, 0, 0, 0, 1, -2],
$$
where $n$ runs over non-negative integers.
It is easy to compute that the spin LKTs consist of those with $0\leq n\leq 5$, and they all have spin norm $\sqrt{\frac{231}{2}}$ which equals to $\|\Lambda\|$. Then we compute that $H_D(L(-4\zeta))$ consists of the following twelve $\widetilde{K}$-types without multiplicities:
\begin{align*}
&[1, 0, 0, 0, 0, 0, 11], [0, 0, 0, 0, 0, 1, -11], [2, 0, 0, 0, 0, 0, 1], [0, 0, 0, 0, 0, 2, -1],\\
&[0, 0, 0, 0, 1, 0, 5], [0, 0, 1, 0, 0, 0, -5], [0, 0, 0, 0, 0, 0, \pm 15],  [0, 1, 0, 0, 0, 0, \pm 9], [1, 0, 0, 0, 0, 1, \pm 3].
\end{align*}

On the other hand, the module $L(-8\zeta)$ has two spin LKTs:
$$
{\rm LKT}=[0,0,0,0,0,0,-24],\quad [1,0,0,0,0,0,-28],
$$
and infinitsimal character $\Lambda=[1,1,1,0,1,0,1]$. They have spin norm $\sqrt{\frac{159}{2}}$, which equals to $\|\Lambda\|$. Then we compute that $H_D(L(-8\zeta))$ consists of the $\widetilde{K}$-types $\zeta$ and $-\zeta$ without multiplicities.

For each of the above Wallach modules, the $\widetilde{K}$-types showing up in the Dirac chomology can be characterized as  members of the following set
\begin{equation}\label{eq-Dirac-wallach}
\{w\Lambda-\rho_c\mid w\in W(\frg, \frt_f)^1\}
\end{equation}
which are dominant for $\Delta^+(\frk, \frt_f)$.
\hfill\qed
\end{example}
\begin{rmk}\label{rmk-Wallach}
The formulation \eqref{eq-Dirac-wallach} is inspired by Theorem 1.3 of \cite{HPP}, which has computed  the Dirac cohomology of the Wallach modules for classical Hermitian symmetric real forms.
\end{rmk}

\begin{example}\label{exam-EVII-cancellation-condition}
Consider the $\#2950$ representation in Table \ref{table-EVII-1011010}. It has infinitesimal character $\Lambda=[1, 0, 1, 1, 0, 1, 0]$, which is conjugate to $\rho_c$ under the action of $W(\frg, \frt_f)$. It has LKT $\mu=[0, 0, 0, 0, 0, 0, 3]$ and three spin LKTs:
$$
\mu_1:=[0,0,0,0,0,1,25],\quad  \mu_2:=[4,0,0,0,0,1,9], \quad \mu_3:=[0,0,0,0,0,5,-7].
$$
Each $\mu_i$ contributes a trivial $\widetilde{K}$-type to the Dirac cohomology. We compute that
$$
B(\mu_1-\mu, \zeta)=11, \quad B(\mu_2-\mu, \zeta)=3, \quad B(\mu_3-\mu, \zeta)=5.
$$
These integers are of the same parity. Thus by Theorem 6.2 of \cite{DW21}, there is \emph{no} cancellation when passing from the Dirac cohomology to the Dirac index.

Therefore, up to a sign, the Dirac index of the representation is three copies of the trivial  $\widetilde{K}$-type. This can be checked directly by \texttt{atlas} as follows:
\begin{verbatim}
set p=parameter(KGB(G)[2950],[1,0,1,1,0,4,0],[0,0,0,0,0,4,0])
show_dirac_index(p)
coeff  lambda            higest weight    fund.wt.coords.       dim
3      [1,0,1,1,0,1,0]  [0,0,0,0,0,0,0 ]  [0,0,0,0,0,0]          1
\end{verbatim}
\hfill\qed
\end{example}

The following example is another illustration of cohomological induction.

\begin{example}\label{exam-FS-Aqlambda}
Let us demonstrate that the fifth entry of Table \ref{table-EVII-1110111} and the third entry of Table \ref{table-EVII-1101011} are both $A_{\frq}(\lambda)$ modules.
\begin{verbatim}
G:E7_h
set x=KGB(G,1550)
support(x)
Value: [0,2,3,4,5,6]
set Q=Parabolic:(support(x),x)
set L=Levi(Q)
set t=trivial(L)
theta_induce_irreducible(parameter(x(t),lambda(t)-[0,2,0,0,0,0,0],nu(t)),G)
Value:
1*parameter(x=1769,lambda=[1,3,2,-2,1,2,1]/1,nu=[1,5,2,-5,0,2,1]/1) [257]
theta_induce_irreducible(parameter(x(t),lambda(t)-[0,3,0,0,0,0,0],nu(t)),G)
Value:
1*parameter(x=1923,lambda=[1,1,-1,3,-2,2,1]/1,nu=[1,0,-3,5,-5,2,1]/1) [208]
\end{verbatim}
\hfill\qed
\end{example}

One can use the method in Example \ref{exam-FS-Aqlambda} to realize many FS-scattered representations as $A_{\frq}(\lambda)$ modules. But some FS-scattered representations of \texttt{E7\_h} are \emph{not} $A_{\frq}(\lambda)$ modules. To fill in this gap, we shall need special unipotent representations.

\section{Special unipotent representations}\label{sec-unip}
The special unipotent representations for \texttt{E7\_h} in the sense of \cite{BV} has been determined by Adams et al., see \cite{LSU}. Whenever such a representation is \emph{non-trivial} and is a Dirac series, we will mark it with a $\clubsuit$ in Section \ref{sec-appendix}. There are nine representations marked with  $\clubsuit$s in total, among which six are highest/lowest weight modules covered by Example \ref{exam-series-hwt}. It remains to analyze the $\#2950/\#2949$ in Table \ref{table-EVII-1011010} and the $\#2973$ in Table \ref{table-EVII-1110101}.

\begin{verbatim}
G:E7_h
set kgp=KGP(G,[0,1,2,3,4,5])
set P=kgp[3]
set L=Levi(P)
L
Value: connected real group with Lie algebra 'e6(so(10).u(1)).u(1)'
set t=trivial(L)
set tm9=parameter(x(t),lambda(t)-[0,0,0,0,0,0,9],nu(t))
goodness(tm9,G)
Value: "Weakly fair"
theta_induce_irreducible(tm9,G)
Value:
1*parameter(x=2950,lambda=[1,0,1,1,0,4,0]/1,nu=[0,0,0,0,0,4,0]/1) [100]
\end{verbatim}
We conclude that the $\#2950$ representation in Table \ref{table-EVII-1011010} is a weakly fair $A_{\frq}(\lambda)$ module. Then so is the $\#2949$ representation. On the other hand, the $\#2973$ in Table \ref{table-EVII-1110101} is \emph{not} a weakly fair $A_{\frq}(\lambda)$ module.

\section{Appendix}\label{sec-appendix}

This appendix presents all the $73$ \emph{non-trivial} FS-scattered Dirac series representations of $E_{7(-25)}$ according to their infinitesimal characters. The bolded KGB elements in the column $\#x^\prime$ are explained in Example \ref{exam-EVII-1011010}.

\begin{table}[H]
\centering
\caption{Infinitesimal character $[0,1,1,0,1,1,1]$}
\begin{tabular}{lccc}
 $\# x$ & $\lambda/\nu$ &  Spin LKTs  & $\# x^\prime$ \\
\hline
$2960$ & $[-2,2,4,-1,1,1,2]$  & $[0,0,0,0,0,1,16]$, $[0,0,0,0,0,5,2]$, & \textbf{2959}\\
 &  $[-3,\frac{5}{2},\frac{11}{2},-\frac{5}{2},0,0,\frac{5}{2}]$& $[4,0,0,0,0,1,18]$&\\
$915$ & $[0,2,3,-2,1,1,2]$  & $[3,1,0,0,0,1,16]$, $[3,0,0,0,1,0,20]$& \textbf{914}\\   & $[-2,\frac{5}{2},3,-3,0,0,\frac{5}{2}]$ &
\end{tabular}
\label{table-EVII-0110111}
\end{table}

\begin{table}[H]
\centering
\caption{Infinitesimal character $[1,0,0,1,1,1,1]$}
\begin{tabular}{lccc}
$\# x$ & $\lambda/\nu$ &  Spin LKTs  & $\# x^\prime$ \\
\hline
$2881$ & $[2,-1,-3,4,1,1,2]$  & ${\rm LKT}=[0,0,0,0,0,1,13]$, & \textbf{2880}\\
&  $[\frac{5}{2},-\frac{5}{2},-\frac{11}{2},\frac{11}{2},0,0,\frac{5}{2}]$& $[0,0,0,0,0,5,5]$, $[4,0,0,0,0,1,21]$&
\end{tabular}
\label{table-EVII-1001111}
\end{table}

\begin{table}[H]
\centering
\caption{Infinitesimal character $[1,0,1,1,0,1,0]$}
\begin{tabular}{lccc}
$\# x$ & $\lambda/\nu$ &  Spin LKTs  & $\# x^\prime$ \\
\hline
$2950_\clubsuit$ & $[1,0,1,1,0,4,0]$  & $[0,0,0,0,0,1,25]$, $[4,0,0,0,0,1,9]$, & $\textbf{2949}_\clubsuit$ \\
&  $[0,0,0,0,0,4,0]$& $[0,0,0,0,0,5,-7]$&\\
$1977_\clubsuit$ & $[1,-2,1,3,-2,3,0]$  & ${\rm LKT}=[0,0,0,0,0,0,27]$ & $\textbf{1975}_\clubsuit$ \\
&  $[0,-4,0,4,-4,4,0]$&
\end{tabular}
\label{table-EVII-1011010}
\end{table}

\begin{table}[H]
\centering
\caption{Infinitesimal character $[1,0,1,1,0,1,1]$}
\begin{tabular}{lccc}
$\# x$ & $\lambda/\nu$ &  Spin LKTs  & $\# x^\prime$ \\
\hline
$2684$ & $[1,-1,1,4,-3,2,1]$  & $[3,2,0,0,0,0,6]$, $[0,2,0,0,0,3,-6]$ \\
&  $[0,-4,2,5,-5,2,0]$&\\
$2017$ & $[1,-2,1,3,-2,3,1]$  & ${\rm LKT}=[1,0,0,0,0,0,26]$ &\textbf{2016}  \\
&  $[0,-\frac{9}{2},0,\frac{9}{2},-\frac{9}{2},4,1]$&		
\end{tabular}
\label{table-EVII-1011011}
\end{table}

\begin{table}[H]
\centering
\caption{Infinitesimal character $[1,1,0,1,0,1,1]$}
\begin{tabular}{lccc}
$\# x$ & $\lambda/\nu$ &  Spin LKTs  & $\# x^\prime$ \\
\hline
$2954$ & $[1,1,-1,4,-2,1,3]$  & $[0,0,0,0,0,1,19]$, $[0,0,0,0,0,5,-1]$, & \textbf{2953}\\
&  $[0,0,-2,5,-3,0,3]$& $[4,0,0,0,0,1,15]$&\\
$2127$ & $[4,2,-2,1,-1,3,1]$  & $[0,3,0,0,0,0,9]$, $[0,0,0,0,3,0,9]$& \textbf{2126}\\
&  $[5,\frac{3}{2},-\frac{7}{2},0,-\frac{3}{2},\frac{7}{2},0]$& \\
$1923$ & $[1,1,-1,3,-2,2,1]$  & ${\rm LKT}=[0,0,2,0,0,0,11]$,  & \textbf{1922}\\
&  $[1,0,-3,5,-5,2,1]$& $[0,0,2,0,0,2,7]$, $[0,0,0,2,0,0,15]$&\\
$1324$ & $[3,1,-2,2,-1,1,4]$  & $[0,0,0,0,0,1,31]$ & \textbf{1323}\\
&  $[3,0,-3,2,-2,0,4]$& \\
$1226$ & $[2,1,-1,2,-1,1,2]$  & ${\rm LKT}=[0,0,0,0,0,0,33]$& \textbf{1224}\\
&  $[3,0,-3,3,-3,0,3]$&
\end{tabular}
\label{table-EVII-1101011}
\end{table}

\begin{table}[H]
\centering
\caption{Infinitesimal character $[1,1,0,1,1,0,1]$}
\begin{tabular}{lccc}
$\# x$ & $\lambda/\nu$ &  Spin LKTs  & $\# x^\prime$ \\
\hline
$1957$ & $[4,2,-2,1,2,-2,3]$  & $[0,3,0,0,0,1,10]$, $[0,1,0,0,2,1,8]$& \textbf{1956}\\
&  $[5,\frac{3}{2},-\frac{7}{2},0,2,-\frac{7}{2},\frac{7}{2}]$& \\
$1524$ & $[3,1,-2,1,2,-1,4]$  & $[1,0,0,0,0,1,30]$ & \textbf{1523}\\   & $[\frac{7}{2},0,-\frac{7}{2},0,3,-3,4]$ &
\end{tabular}
\label{table-EVII-1101101}
\end{table}

\begin{table}[H]
\centering
\caption{Infinitesimal character $[1,1,0,1,1,1,1]$}
\begin{tabular}{lccc}
$\# x$ & $\lambda/\nu$ &  Spin LKTs  & $\# x^\prime$ \\
\hline
$2465$ & $[4,1,-3,2,1,2,1]$  & ${\rm LKT}=[0,4,0,0,0,0,0]$, \\
&  $[7,1,-7,2,0,2,0]$& $[1,4,0,0,0,0,2]$, $[0,4,0,0,0,1,-2]$&\\
$1713$ & $[2,1,-1,1,1,2,1]$  & ${\rm LKT}=[3,0,0,0,0,0,30]$, & \textbf{1712}\\
&   $[\frac{9}{2},0,-\frac{9}{2},0,0,4,1]$& $[2,0,1,0,0,0,32]$
\end{tabular}
\label{table-EVII-1101111}
\end{table}

\begin{table}[H]
\centering
\caption{Infinitesimal character $[1,1,1,0,1,0,1]$}
\begin{tabular}{lccc}
$\# x$ & $\lambda/\nu$ &  Spin LKTs  & $\# x^\prime$ \\
\hline
$2973_\clubsuit$ & $[1,2,1,-1,3,-1,4]$  & $[4,0,0,0,0,1,6]$, $[1,0,0,0,0,4,-6]$,\\
&  $[0,1,0,-1,4,-3,4]$& $[5,0,0,0,0,0,10]$, $[0,0,0,0,0,5,-10]$&\\
$2958$ & $[1,2,1,-1,4,-2,3]$  & $[0,0,0,0,0,1,22]$, $[0,0,0,0,0,5,-4],$& \textbf{2957}\\
&  $[0,1,0,-1,\frac{9}{2},-\frac{7}{2},\frac{7}{2}]$& $[4,0,0,0,0,1,12]$&\\
$2848$ & $[1,3,1,-2,5,-2,1]$  & $[3,1,0,0,0,1,4]$, $[1,1,0,0,0,3,-4],$\\
&  $[0,4,2,-4,5,-3,0]$& $[4,1,0,0,0,0,8]$, $[0,1,0,0,0,4,-8]$ & \\
$2366_\clubsuit$ & $[1,4,1,-3,4,0,1]$  & ${\rm LKT}=[0,0,0,0,0,0,24]$, & $\textbf{2365}_\clubsuit$\\
&  $[0,\frac{9}{2},0,-\frac{9}{2},\frac{9}{2},-\frac{1}{2},1]$& $[0,0,0,0,0,1,28]$ \\
$2299$ & $[1,3,1,0,1,-2,3]$  & $[0,0,0,0,3,0,0]$, $[1,0,0,0,2,1,4]$& \textbf{2298}\\
&  $[1,4,1,-1,0,-4,4]$& \\
$2233$ & $[1,4,1,-1,2,-3,5]$  & ${\rm LKT}=[0,0,0,0,0,1,22]$, & \textbf{2232}\\
&  $[0,4,0,-1,1,-4,5]$& $[0,0,0,0,1,0,26]$\\
$2131$ & $[2,2,3,-2,2,-2,3]$  & $[0,3,0,0,0,0,12]$, $[0,0,0,0,3,0,6]$& \textbf{2130}\\
&  $[\frac{3}{2},\frac{3}{2},\frac{7}{2},-\frac{7}{2},2,-\frac{7}{2},\frac{7}{2}]$& \\
$2081$ & $[1,2,2,-2,3,-2,4]$  & ${\rm LKT}=[0,0,0,0,1,0,20]$,& \textbf{2080}\\
&  $[0,\frac{3}{2},2,-\frac{7}{2},\frac{7}{2},-\frac{7}{2},5]$&  $[0,0,0,1,0,0,24]$ \\
$1824$ & $[1,1,4,-4,5,-2,3]$  & $[1,0,1,0,1,1,0]$, $[0,0,2,0,0,2,4],$\\
&  $[1,0,3,-4,4,-3,3]$&$[2,0,0,0,2,0,-4]$&\\
$1741$ & $[1,1,3,-2,3,-2,3]$  & $[0,0,1,0,1,2,2]$, $[1,0,0,0,2,1,-2]$& \textbf{1740}\\
&  $[1,0,3,-\frac{7}{2},\frac{7}{2},-\frac{7}{2},\frac{7}{2}]$& \\
$1669$ & $[1,1,3,-2,2,-1,4]$  & $[0,0,0,0,0,1,28]$& \textbf{1668}\\
&  $[0,0,\frac{7}{2},-\frac{7}{2},3,-3,4]$& \\
$1606$ & $[1,1,3,-2,3,-2,3]$  & ${\rm LKT}=[0,0,0,0,0,0,30]$& \textbf{1604}\\
&  $[0,0,\frac{7}{2},-\frac{7}{2},\frac{7}{2},-\frac{7}{2},\frac{7}{2}]$& \\
$1580$ & $[2,4,2,-3,2,-1,3]$  & ${\rm LKT}=[0,0,2,0,0,0,14]$, & \textbf{1579}\\
&  $[1,4,\frac{3}{2},-4,\frac{3}{2},-\frac{3}{2},\frac{5}{2}]$& $[1,1,1,0,0,0,18]$\\
$1025$ & $[2,2,1,-1,1,0,2]$  & $[0,1,0,0,0,4,-2]$, $[0,0,0,1,0,3,-6]$& \textbf{1023}\\
&  $[\frac{5}{2},\frac{5}{2},0,-\frac{5}{2},0,0,\frac{5}{2}]$& \\
$959$ & $[2,2,1,-2,3,-1,2]$  & ${\rm LKT}=[3,0,0,0,1,0,8]$, & \textbf{958}\\
&  $[2,2,0,-3,3,-2,1]$& $[3,1,0,0,0,1,10]$, $[3,0,0,1,0,0,6]$&
\end{tabular}
\label{table-EVII-1110101}
\end{table}

\begin{table}[H]
\centering
\caption{Infinitesimal character $[1,1,1,0,1,1,1]$}
\begin{tabular}{lccc}
$\# x$ & $\lambda/\nu$ &  Spin LKTs  & $\# x^\prime$ \\
\hline

$2989_\clubsuit$ & $[3,2,2,-1,1,1,2]$  & ${\rm LKT}=[0, 0, 0, 0, 0, 0, 12]$,  & $\textbf{2988}_\clubsuit$\\
&  $[4,\frac{5}{2},\frac{5}{2},-\frac{5}{2},0,0,\frac{5}{2}]$& $[0, 0, 0, 0, 0, 0, 12]+ n\beta$, $1\leq n\leq 5$ &\\

$2837$ & $[2,2,1,-2,3,1,2]$  & ${\rm LKT}=[0,0,0,0,0,2,14]$,  & \textbf{2836}\\
&  $[\frac{5}{2},3,0,-\frac{11}{2},\frac{11}{2},0,\frac{5}{2}]$& $[1,0,0,0,0,2,16]$, $[0,0,0,0,0,5,8]$, &\\ & &$[2,0,0,0,0,2,18]$, $[3,0,0,0,0,2,20]$ &\\
$2579$ & $[1,1,3,-1,1,1,1]$  & ${\rm LKT}=[0,3,0,0,0,0,0]$, \\
&  $[0,1,7,-5,0,2,0]$& $[1,3,0,0,0,0,2]$, $[0,3,0,0,0,1,-2]$,  &\\ & &$[2,3,0,0,0,0,4]$, $[0,3,0,0,0,2,-4]$ &\\
$1865$ & $[1,1,2,-1,1,2,1]$  & ${\rm LKT}=[2,0,0,0,0,0,28]$, & \textbf{1864}\\
&  $[0,0,\frac{9}{2},-\frac{9}{2},0,4,1]$& $[1,0,1,0,0,0,30]$, $[0,0,2,0,0,0,32]$&\\
$1769$ & $[1,3,2,-2,1,2,1]$  & ${\rm LKT}=[0,0,3,0,0,0,12]$, & \textbf{1768}\\
&  $[1,5,2,-5,0,2,1]$& $[0,0,3,0,0,1,10]$, $[0,0,2,1,0,0,14]$&\\
$1033$ & $[2,2,1,-1,1,1,2]$  & ${\rm LKT}=[0,1,0,0,0,0,36]$, & \textbf{1032}\\
&  $[3,3,0,-3,0,0,3]$& $[0,0,0,0,1,0,38]$
\end{tabular}
\label{table-EVII-1110111}
\end{table}

\begin{table}[H]
\centering
\caption{Infinitesimal character $[1,1,1,1,0,1,0]$}
\begin{tabular}{lccc}
$\# x$ & $\lambda/\nu$ &  Spin LKTs  & $\# x^\prime$ \\
\hline
$1438$ & $[1,1,2,1,-3,4,1]$  & $[2,0,1,0,0,3,2]$, $[3,0,0,0,1,2,-2]$\\
 &$[1,0,3,0,-4,4,-3]$ &
\end{tabular}
\label{table-EVII-1111010}
\end{table}

\begin{table}[H]
\centering
\caption{Infinitesimal character $[1,1,1,1,0,1,1]$}
\begin{tabular}{lccc}
$\# x$ & $\lambda/\nu$ &  Spin LKTs  & $\# x^\prime$ \\
\hline
$2768$ & $[2,2,1,1,-2,3,2]$  & ${\rm LKT}=[0,0,0,0,0,3,15]$, $[1,0,0,0,0,3,17],$ & \textbf{2767}\\
&  $[\frac{5}{2},3,0,0,-\frac{11}{2},\frac{11}{2},\frac{5}{2}]$& $[0,0,0,0,0,5,11]$, $[2,0,0,0,0,3,19]$&
\end{tabular}
\label{table-EVII-1111011}
\end{table}

\begin{table}[H]
\centering
\caption{Infinitesimal character $[1,1,1,1,1,0,1]$}
\begin{tabular}{lccc}
$\# x$ & $\lambda/\nu$ &  Spin LKTs  & $\# x^\prime$ \\
\hline
$2666$ & $[2,1,1,1,1,-1,3]$  & ${\rm LKT}=[0,0,0,0,0,4,16]$, & \textbf{2665}\\
&  $[\frac{5}{2},3,0,0,0,-\frac{11}{2},8]$&  $[0,0,0,0,0,5,14]$, $[1,0,0,0,0,4,18]$ &		
\end{tabular}
\label{table-EVII-1111101}
\end{table}

\centerline{\scshape Acknowledgements}
We are deeply grateful to the \texttt{atlas} mathematicians. We also thank Daniel Wong sincerely for helping us with Section \ref{sec-unip}.

\centerline{\scshape Funding}
Dong is supported by the National Natural Science Foundation of China (grant 12171344).

\end{document}